\definecolor {processblue}{cmyk}{0.96,0,0,0}
\newtheorem{theorem}{Theorem}[section]
\newtheorem{lemma}[theorem]{Lemma}
\newtheorem{proposition}[theorem]{Proposition}
\newtheorem{remark}[theorem]{Remark}
\theoremstyle{definition}
\newcolumntype{L}{>{$}l<{$}}
\def\MN{{\mathbb{N}}}
\def\CC{{\mathcal{C}}}
\def\CP{{\mathcal{P}}}
\def\CR{{\mathcal{R}}}
\def\CT{{\mathcal{T}}}
\DeclareMathOperator{\CESP}{\textbf{CPP}}
\DeclareMathOperator{\Aut}{{\bf Aut}}
\DeclareMathOperator{\St}{{St}}
\DeclareMathOperator{\red}{{red}}
\DeclareMathOperator{\lift}{{lift}}
\DeclareMathOperator{\CPr}{\textbf{CP}}
\DeclareMathOperator{\Ch}{{Ch}}
\newcommand{\gp}[1]{{\left\langle #1 \right\rangle}}
\newcommand{\Set}[2]{\left\{\, #1 \;\middle|\; #2 \,\right\}}
\newcommand{\rb}[1]{{\left( #1 \right)}}
\title[Conjugacy problem in the first Grigorchuk group]{Linear time algorithm for the conjugacy problem in the first Grigorchuk group}
\author{Mitra Modi, Mathew Seedhom, Alexander Ushakov}
\address{Department of Mathematics, Stevens Institute of Technology, Hoboken, NJ, USA}
\curraddr{}
\email{mmodi3,mseedhom,aushakov@stevens.edu}
\thanks{The authors are thankful for the support from the Pinnacle Scholars Program of Stevens Institute of Technology}
\date{\today}
\begin{document}

\begin{abstract}
We prove that the conjugacy problem in
the first Grigorchuk group $\Gamma$
can be solved in linear time. Furthermore, the problem to decide
if a list of elements $w_1,\ldots,w_k\in\Gamma$ contains a pair
of conjugate elements can be solved in linear time.
We also show that
a conjugator for a pair of conjugate element $u,v\in\Gamma$
can be found in polynomial time.\\
\noindent
\textbf{Keywords.}
Grigorchuk group,
word problem,
conjugacy problem,
complexity.

\noindent
\textbf{2010 Mathematics Subject Classification.} 94A60, 68W30.
\end{abstract}

\maketitle

\section{Introduction}

The first Grigorchuk group $\Gamma$ is famous for its
numerous remarkable properties. It is an infinite
residually finite group.
$\Gamma$ is \emph{just infinite}, i.e., $\Gamma$
is infinite but every proper quotient group of $\Gamma$ is finite.
Every element in $\Gamma$ has finite order that is a power of $2$,
see \cite{Grigorchuk:1980}.
$\Gamma$ has \emph{intermediate growth}, i.e., it
has growth that is faster than any
polynomial but slower than any exponential function,
see \cite{Grigorchuk:1984}.
$\Gamma$ is \emph{amenable},
but not \emph{elementary amenable}, see \cite{Grigorchuk:1984}.
It is finitely generated, but is not finitely related,
see \cite{Lysenok:1985}.
It has decidable word problem, conjugacy problem \cite{Leonov:1998,Rozhkov:1998},
and subgroup membership problem \cite{Grigorchuk-Wilson:2003}.
The Diophantine problem for quadratic equations is
decidable, see \cite{Lysenok_Miasnikov_Ushakov:2016}.
$\Gamma$ has commutator width $2$, see \cite{Bartholdi-Groth-Lysenok:2019}.
Also, $\Gamma$ was proposed as a possible platform for cryptographic
schemes, see \cite{GZ,P,MU5}.
These and other properties of $\Gamma$ are discussed
in detail in \cite{Harpe} and \cite{Grigorchuk:2006}.
In this paper we discover a new special property of $\Gamma$.
Consider the following problems.

\medskip\noindent
\textbf{Conjugacy problem in $G$,
abbreviated $\CPr(G)$},
is an algorithmic question to determine if
for given $w_1,w_2\in G$ there exists
$x\in G$ such that $w_1 = x^{-1} w_2 x$,
or not.

\medskip\noindent
\textbf{Conjugate pair problem in $G$,
abbreviated $\CESP(G)$},
is an algorithmic question to determine if
a list of elements $w_1,\ldots,w_k\in G$
contains a pair of conjugate elements,
or not.

\medskip
We prove that $\CESP(\Gamma)$ can be solved in linear time.
In particular, the conjugacy problem
and, hence, the word problem can be solved in linear time
in $\Gamma$.
This is surprising because
there are not many problems in computational algebra
that can be solved in linear time. In particular,
the class of groups that have linear conjugacy problem
appears to be small.
Below we review several types of groups
that have (nearly) linear conjugacy problem.

It is easy to see that \emph{virtually abelian} groups
have linear time $\CESP$.
\emph{Free non-abelian} groups have linear time $\CESP$ too.
Indeed, for $w_1,\ldots,w_k$
\begin{itemize}
\item
cyclically reduce $w_i$'s;
\item
using \emph{Booth's algorithm},
see \cite{Booth:1980},
compute lexicographically minimal cyclic permutations of $w_i$'s;
\item
decide if two lexicographically minimal cyclic permutations are equal
(e.g. using tries described in Section \ref{se:trie}).
\end{itemize}
D. Epstein and D. Holt in \cite{Epstein-Holt:2006}
proved that
the conjugacy problem can be solved
in linear time in any fixed \emph{word-hyperbolic} group.
Their algorithm reduces an instance of the conjugacy problem
to an instance of the pattern matching problem
(Knuth--Morris--Pratt algorithm, see \cite{KMP}).
We believe that
using Booth's algorithm instead of KMP
Epstein--Holt algorithm can be improved
to solve $\CESP$ in linear time.
For a fixed finitely generated
\emph{nilpotent} group the best known result states that
the conjugacy problem can be solved in nearly linear time
$O((|u|+|v|)\log^2(|u|+|v|))$ for an instance
$(u,v)$, see
\cite[Theorem 4.6]{Macdonald-Myasnikov-Nikolaev-Vassileva:2015}.
It is not known to the authors if that can be improved
to linear time complexity especially for a list of elements.
The conjugacy problem in other classes of groups
appears to be harder.

Notice that for each class of groups with linear time $\CESP$
mentioned above a decision algorithm
for an instance $w_1,\ldots,w_k$ of the problem
computes a list $w_1^\ast,\ldots,w_k^\ast$ of
unique representatives of the corresponding conjugacy classes,
and decides if the list contains two equal objects.
Our new algorithm for the Grigorchuk group $\Gamma$
acts in a similar fashion.
Given a list of words $w_1,\ldots,w_k$ it
assigns a representative $w_i^\ast$ for the
conjugacy class of $w_i$.
Our choice of $w_i^\ast$ is not unique
and depends on a given set $\{w_1,\ldots,w_k\}$.

In Section \ref{se:preliminaries} we review the definition
of the group $\Gamma$, the splitting operation, and the norm.
In Section \ref{se:conjugacy-gamma} we present the
linear time algorithm for $\CESP(\Gamma)$.
In Section \ref{se:conjugator} we prove that
a conjugator for a pair of conjugate element $u,v\in\Gamma$
can be found in polynomial time.

\section{Preliminaries: the Grigorchuk group $\Gamma$}
\label{se:preliminaries}

\subsection{Automorphisms on an infinite regular rooted binary tree}

An infinite regular rooted binary tree $\CT$ is
defined by a pair $(V, E)$,
with the vertex set $V = \{0, 1\}^\ast$ and the edge set
$E = \{w \to w0 \mid w \in \{0, 1\}^\ast\} \cup \{w \to w1 \mid w \in \{0, 1\}^*\}$,
where $w0$ and $w1$ refer to the concatenations of $w$ with $0$ and $w$ with $1$ respectively.
An automorphism of $\CT$ is a bijective function $\alpha:V\to V$ that preserves connectedness, i.e., satisfies
$\alpha(w)$ is connected to $\alpha(w')$
if and only if $w$ is connected to $w'$.
The set of all automorphisms, denoted by $\Aut(\CT)$,
with composition is a group called the
\emph{group of automorphisms} of $\CT$.

\subsection{The group $\Gamma$}

The first Grigorchuk group $\Gamma$ is a subgroup of
$\Aut(\CT)$ generated by elements $a,b,c,d$ defined
as follows.
For every $w\in\{0,1\}^\ast$
\begin{align*}
a(0w)=&1w, & a(1w)=&0w,\\
b(0w)=&0 a(w),& b(1w)=&1 c(w),\\
c(0w)=&0 a(w),& c(1w)=&1 d(w),\\
d(0w)=&0w,& d(1w)=&1 b(w),
\end{align*}
which can be visualized as shown in the figure below.
\begin{center}
\includegraphics{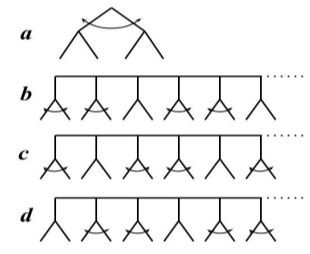}
\end{center}
The elements $a,b,c,d$ satisfy the following identities in $\Gamma$:
\begin{align*}
& a^2=b^2=c^2=d^2=1 && cd=dc=b\\
& bc=cb=d && bd=db=c,
\end{align*}
and, hence, every word $w$ over the group alphabet
$X = \{a,b,c,d\}$ of $\Gamma$
can be reduced to a word $\red(w)$ of the form
$$
[a]\ast_1 a\ast_2 a \ldots a \ast_k [a],
$$
where $[a]$ stands for $a$ or $\varepsilon$ and
$\ast_i \in \{b,c,d\}$, called a \emph{reduced word}.
Denote the set of all reduced words by $\CR$.

Consider the subgroup
$\St(1) = \{g\in\Gamma \mid g(0)=0 \mbox{ and } g(1)=1\}$,
called \emph{the first level stabilizer}.
It is easy to see that $w\in\St(1)$
if and only if
$\delta_a(w)$ (the number of occurrences of $a$ in $w$) is even,
and
$$
\St(1)=\gp{b, c, d, aba, aca, ada }.
$$
Denote the set $\Set{w\in\CR}{\delta_a(w)\mbox{ is even}}$ by $\CR^e$.

\subsection{Splitting}

If $g\in \St(1)$, then, by definition, $g$ fixes vertices $0$ and $1$
and naturally defines automorphisms $g_0,g_1$
on the left and right subtrees of $\CT$ respectively.
This defines a homomorphism $\psi: \St(1) \to \Gamma \times \Gamma$
\begin{center}
$g\stackrel{\psi}{\mapsto}(g_0,g_1)$,
\end{center}
and homomorphisms into each component
$$
g\stackrel{\phi_0}{\longmapsto} g_0
\mbox{ and }
g\stackrel{\phi_1}{\longmapsto} g_1.
$$
We often write
$g=(g_0,g_1)$ instead of $g=\psi(g_0,g_1)$
to simplify the notation.
Since $\St(1)$ is generated by $b,c,d,aba,aca,ada$
to define $\psi$ it sufficient to define the images of $\psi$
on the generators:
\begin{align*}
\psi(b) &= (a, c)  & \psi(aba) &= (c, a)\\
\psi(c) &= (a, d)  & \psi(aca) &= (d, a)\\
\psi(d) &= (1, b)  & \psi(ada) &= (b, 1).
\end{align*}

\subsection{Norm}

Here we shortly recall the results of
\cite[Section 3]{Lysenok_Miasnikov_Ushakov:2010}
(see also \cite{Bartholdi:1998}).
Fix positive real values $\gamma_a$, $\gamma_b$, $\gamma_c$, $\gamma_d$,
termed \emph{weights}.
For a word $w \in X^*$ the number
$$
||w|| = \gamma_a\delta_a(w)+\gamma_b\delta_b(w)+\gamma_c\delta_c(w)+\gamma_d\delta_d(w),
$$
is called the \emph{norm} of $w$. The length $|w|$ is
a special case of the norm when
$\gamma_a$ = $\gamma_b$ = $\gamma_c$ = $\gamma_d$ = 1.
Clearly, for any $u, v, w \in X^\ast$ the following holds:
\begin{itemize}
\item $\min\{\gamma_a, \gamma_b, \gamma_c, \gamma_d\}\dot|w| \le ||w|| \le \max\{\gamma_a, \gamma_b, \gamma_c, \gamma_d\}\dot|w|$.
\item $||uv|| = ||u|| + ||v||$.
\item If the numbers $\gamma_b, \gamma_c, \gamma_d$ satisfy the triangle inequality, then
$$
||red(w)|| \le ||w||.
$$
\end{itemize}

Next, we define particular
$\gamma_a, \gamma_d, \gamma_c, \gamma_b$ to be
used for the rest of the paper.
Let $\alpha$ be the unique
real root of the polynomial $2x^3 - x^2 - x - 1$,
$$
\alpha \in (1.233751, 1.233752) \approx 1.23375.
$$
Put
\begin{align*}
\gamma_a &= \alpha^2 + \alpha  - 1 &&\in (1.755892, 1.755896),\\
\gamma_b &= 2,\\
\gamma_c &= \alpha^2 - \alpha + 1 &&\in (1.28839, 1.288392),\\
\gamma_d &= -\alpha^2 + \alpha + 1 &&\in (0.711608, 0.71161).
\end{align*}
It is easy to check that
the weights $\gamma_a, \gamma_d, \gamma_c, \gamma_b$
satisfy the triangle inequality and for any $w\in\CR$ we have
\begin{equation}\label{eq:norm_properties}
|w|<\frac{1}{0.7}||w||
\ \ \ \mbox{ and }\ \ \
||w|| \le 2 |w|.
\end{equation}
For $w \in \CR$ and $i = 0, 1$ define
$$
w_i =
\begin{cases}
red(\phi_i(w)),  & \mbox{if } w \in \mathcal{R}^e, \\
red(\phi_i(wa)), & \mbox{if } w \notin \mathcal{R}^e.
\end{cases}
$$

\begin{proposition}[{{\cite[Proposition 3.6]{Lysenok_Miasnikov_Ushakov:2010}}}]
\label{pr:norm-bound}
For any $w \in \CR$ the following holds:
\begin{itemize}
\item[(a)]
If $||w|| \ge 9$, then $\frac{||w||}{||w_0|| + ||w_1||} \ge 1.03.$
\item[(b)]
If $||w|| \ge 200$, then $\frac{||w||}{||w_0|| + ||w_1||} \ge 1.22.$
\end{itemize}
\end{proposition}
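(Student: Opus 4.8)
The plan is to compute the denominator $||w_0||+||w_1||$ essentially exactly in terms of the letter counts $\delta_b(w),\delta_c(w),\delta_d(w)$, to bound the numerator $||w||$ from below using the alternating structure of reduced words, and then to observe that the particular choice of weights forces the per-letter ratio of these two quantities to equal $\alpha$ in all three cases. The contraction is thus not visible letter-by-letter in the subtree words themselves; it comes entirely from the $a$'s, which cost weight in $w$ but produce no direct contribution to the subtrees.

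First I would analyze how a single syllable of a reduced word $w=[a]\ast_1 a\ast_2 a\cdots a\ast_k[a]$ is routed into the two subtrees. Reading $w$ from left to right and tracking the parity of the number of $a$'s seen so far, each letter $\ast_i\in\{b,c,d\}$ contributes a pair to $(\phi_0(w),\phi_1(w))$ determined by the splitting table: a letter in the even state uses $\psi(b),\psi(c),\psi(d)$, and one in the odd state uses $\psi(aba),\psi(aca),\psi(ada)$. The crucial point is that the two states merely swap the two coordinates, so the combined contribution to $||\phi_0(w)||+||\phi_1(w)||$ is independent of the state: each $b$ contributes $\gamma_a+\gamma_c=2\alpha^2$, each $c$ contributes $\gamma_a+\gamma_d=2\alpha$, and each $d$ contributes $\gamma_b=2$. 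Since the norm is additive on concatenations and $\red$ does not increase the norm (the triangle inequality holds for the chosen weights), this yields
$$||w_0||+||w_1||\le 2\alpha^2\,\delta_b(w)+2\alpha\,\delta_c(w)+2\,\delta_d(w).$$
The same bound holds verbatim when $w\notin\CR^e$, because the terminal $a$ appended in $\red(\phi_i(wa))$ changes neither the syllable counts nor the per-syllable contributions.

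Next I would bound $||w||$ from below. In a reduced word consecutive syllables are separated by exactly one $a$, so $\delta_a(w)\ge\delta_b(w)+\delta_c(w)+\delta_d(w)-1$. Substituting into the definition of the norm gives
$$||w||+\gamma_a\ge(\gamma_a+2)\,\delta_b(w)+(\gamma_a+\gamma_c)\,\delta_c(w)+(\gamma_a+\gamma_d)\,\delta_d(w).$$
The key arithmetic step is then to verify, using that $\alpha$ is a root of $2x^3-x^2-x-1$ (equivalently $\alpha^2+\alpha+1=2\alpha^3$), that each coefficient on the right is exactly $\alpha$ times the matching coefficient in the bound for $||w_0||+||w_1||$: indeed $\gamma_a+2=\alpha^2+\alpha+1=\alpha\cdot2\alpha^2$, $\gamma_a+\gamma_c=2\alpha^2=\alpha\cdot2\alpha$, and $\gamma_a+\gamma_d=2\alpha=\alpha\cdot2$. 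Combining the two displays gives $||w||+\gamma_a\ge\alpha(||w_0||+||w_1||)$, hence
$$\frac{||w||}{||w_0||+||w_1||}\ge\frac{\alpha\,||w||}{||w||+\gamma_a}.$$
As the right-hand side is increasing in $||w||$, substituting $||w||\ge9$ and $||w||\ge200$ together with $\gamma_a<1.7559$ and $\alpha>1.23375$ produces the two claimed bounds $1.03$ and $1.22$.

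I expect the only delicate point to be the bookkeeping in the first step: checking that the state-parity mechanism correctly records into which subtree each syllable is sent, and confirming that the contribution to the summed norm is genuinely state-independent, so that the arrangement of the letters is irrelevant and only the counts $\delta_b,\delta_c,\delta_d$ matter. Everything after that is a short verification driven by the defining polynomial of $\alpha$; in particular, the constants $1.03$ and $1.22$ are explained simply as $\alpha/(1+\gamma_a/9)$ and $\alpha/(1+\gamma_a/200)$, both slightly below the asymptotic contraction factor $\alpha$.
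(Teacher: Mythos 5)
Your proof is correct: the per-letter accounting is right ($\psi$ sends $b$-type letters to pairs of total weight $\gamma_a+\gamma_c=2\alpha^2$, $c$-type to $\gamma_a+\gamma_d=2\alpha$, $d$-type to $\gamma_b=2$, independently of the parity state since the two states only swap coordinates), the count $\delta_a(w)\ge\delta_b(w)+\delta_c(w)+\delta_d(w)-1$ for reduced words, the identity $2\alpha^3=\alpha^2+\alpha+1$ making each numerator coefficient exactly $\alpha$ times the matching denominator coefficient, and the final numerics $9\alpha/(9+\gamma_a)>1.03$ and $200\alpha/(200+\gamma_a)>1.22$ all check out. Note that this paper does not actually prove the proposition --- it is imported by citation from Lysenok--Miasnikov--Ushakov (2010), Proposition 3.6 --- and your argument is essentially the one intended there: the weights $\gamma_a,\gamma_b,\gamma_c,\gamma_d$ are designed precisely so that splitting contracts the norm by the factor $\alpha$ per $\{b,c,d\}$-letter, with the $a$'s of $w$ supplying the surplus, so your proposal is a faithful self-contained reconstruction rather than a new route.
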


\section{Conjugacy problem in $\Gamma$}
\label{se:conjugacy-gamma}

Let $G$ be a group and $u, v \in G$. We say that $u$ and $v$ are \emph{conjugate} if
$u=x^{-1}vx$ for some $x \in G$, denoted $u\sim_G v$.
\emph{The conjugacy problem in $G$}, denoted $\CPr(G)$, is an algorithmic question
of deciding if two given elements $u,v\in G$ are  conjugate, or not.
In this section we describe a linear time algorithm
for the conjugacy problem in $\Gamma$.
But first we need to recall several facts
from \cite{Grigorchuk:2006}.

\begin{proposition}[{{\cite{Grigorchuk:2006}}}]
\label{pr:conj-systems}
For any $u, v, x \in X^\ast$ the following holds:
\begin{enumerate}
    \item If $u, v, x \in \St(1)$, and $u=(u_0, u_1)$, $v=(v_0, v_1)$, $x=(x_0, x_1)$, then
    \begin{center}
        $u=x^{-1}vx \iff $
        $\begin{cases}$
        $u_0=x_0^{-1}v_0x_0$, $\\$
        $u_1=x_1^{-1}v_1x_1$.
        $\end{cases}$
    \end{center}
    \item If $u, v, xa \in \St(1)$, and $u=(u_0, u_1)$, $v=(v_0, v_1)$, $xa=(x_0, x_1)$, then
    \begin{center}
        $u=x^{-1}vx \iff $
        $\begin{cases}$
        $u_0=x_1^{-1}v_1x_1$, $\\$
        $u_1=x_0^{-1}v_0x_0$.
        $\end{cases}$
    \end{center}
    \item If $ua, va, x \in \St(1)$, and $ua=(u_0, u_1)$, $va=(v_0, v_1)$, $x=(x_0, x_1)$, then
    \begin{center}
        $u=x^{-1}vx \iff $
        $\begin{cases}$
        $u_0u_1=x_0^{-1}v_0v_1x_0$, $\\$
        $x_1=v_1x_0u_1^{-1}$.
        $\end{cases}$
    \end{center}
    \item If $ua, va, xa \in \St(1)$, and $ua=(u_0, u_1)$, $va=(v_0, v_1)$, $xa=(x_0, x_1)$, then
    \begin{center}
        $u=x^{-1}vx \iff $
        $\begin{cases}$
        $u_1u_0=x_0^{-1}v_0v_1x_0$, $\\$
        $x_1=v_1x_0u_0^{-1}$.
        $\end{cases}$
    \end{center}
\end{enumerate}
\end{proposition}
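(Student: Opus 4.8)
The plan is to reduce each of the four cases to the fact that $\psi\colon\St(1)\to\Gamma\times\Gamma$ is an \emph{injective} homomorphism (injectivity because $\Gamma$ acts faithfully on $\CT$, so an element of $\St(1)$ is determined by its action $(g_0,g_1)$ on the two subtrees), together with two elementary observations about the generator $a$. First, $a^2=1$, so any $x\notin\St(1)$ may be written as $x=(xa)a$ with $xa\in\St(1)$. Second, $a$ normalizes $\St(1)$ and conjugation by $a$ interchanges the two coordinates: for $g\in\St(1)$ with $\psi(g)=(g_0,g_1)$ one has $\psi(a^{-1}ga)=\psi(aga)=(g_1,g_0)$. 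The swap can be read off the generator images (e.g. $\psi(b)=(a,c)$ while $\psi(aba)=(c,a)$) and holds in general because $a$ exchanges the two subtrees of $\CT$.

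For case \textbf{(1)}, with $u,v,x\in\St(1)$, I would apply $\psi$ to $u=x^{-1}vx$. As $\psi$ is a homomorphism, $\psi(x^{-1}vx)=(x_0^{-1}v_0x_0,\,x_1^{-1}v_1x_1)$, and injectivity turns $u=x^{-1}vx$ into the stated coordinatewise system. For case \textbf{(2)}, with $xa\in\St(1)$ but $x\notin\St(1)$, I would substitute $x=(xa)a$ to get $u=a\,(xa)^{-1}v(xa)\,a$. The inner element $h=(xa)^{-1}v(xa)$ lies in $\St(1)$ with $\psi(h)=(x_0^{-1}v_0x_0,\,x_1^{-1}v_1x_1)$, and $u=a^{-1}ha$ is its $a$-conjugate; the swap observation and injectivity then give $u_0=x_1^{-1}v_1x_1$ and $u_1=x_0^{-1}v_0x_0$.

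Cases \textbf{(3)} and \textbf{(4)} are the substantive ones: here $u,v\notin\St(1)$, so $\psi(u),\psi(v)$ are not defined directly and only $ua,va$ split. The idea is to multiply the conjugacy equation by $a$ until every factor is an $\St(1)$-element (tracking one coordinate swap), split, and then eliminate $x_1$. In \textbf{(3)}, writing $u=(ua)a$ and $v=(va)a$ and right-multiplying $u=x^{-1}vx$ by $a$ yields $ua=x^{-1}(va)(axa)$; splitting with $\psi(axa)=(x_1,x_0)$ gives $u_0=x_0^{-1}v_0x_1$ and $u_1=x_1^{-1}v_1x_0$. Case \textbf{(4)} is analogous: after setting $U=ua$, $V=va$, $X=xa$ and left-multiplying by $a$, one reaches $aUa=X^{-1}V(aXa)$, and the swap on the left produces $u_1=x_0^{-1}v_0x_1$ and $u_0=x_1^{-1}v_1x_0$.

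Finally I would recast these two-coordinate systems into the stated form. In \textbf{(3)} the second equation $u_1=x_1^{-1}v_1x_0$ rearranges to $x_1=v_1x_0u_1^{-1}$, and multiplying the two coordinate equations gives $u_0u_1=(x_0^{-1}v_0x_1)(x_1^{-1}v_1x_0)=x_0^{-1}v_0v_1x_0$; substituting the formula for $x_1$ back shows the derivation reverses, so the stated pair is \emph{equivalent} to the original system, hence to $u=x^{-1}vx$. Case \textbf{(4)} is identical with the two coordinates interchanged, yielding $x_1=v_1x_0u_0^{-1}$ and $u_1u_0=x_0^{-1}v_0v_1x_0$. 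The only real difficulty is bookkeeping: inserting the factors of $a$ on the correct side, keeping straight which elements lie in $\St(1)$, and—in \textbf{(3)} and \textbf{(4)}—checking that the elimination of $x_1$ is a genuine two-way equivalence rather than a one-directional consequence.
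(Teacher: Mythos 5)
Your proof is correct. Note that the paper does not actually prove this proposition --- it is imported as a known fact from \cite{Grigorchuk:2006} --- so there is no in-paper argument to compare against; what you have written is the standard derivation, and it is complete: the splitting map $\psi$ is an injective homomorphism on $\St(1)$, $a^{2}=1$ lets you rewrite each factor as an $\St(1)$-element times $a$, conjugation by $a$ swaps the two coordinates, and in cases (3) and (4) your elimination of $x_1$ via $x_1=v_1x_0u_1^{-1}$ (resp.\ $x_1=v_1x_0u_0^{-1}$) is a genuinely reversible substitution, since substituting it back recovers both coordinate equations $u_0=x_0^{-1}v_0x_1$, $u_1=x_1^{-1}v_1x_0$ (resp.\ $u_1=x_0^{-1}v_0x_1$, $u_0=x_1^{-1}v_1x_0$). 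All four case computations check out against the stated systems, so the proposal can stand as a self-contained proof of the quoted result.
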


\subsection{$Q$-sets}

Let $K$ be the normal closure of the element $abab$ in $\Gamma$.
Some important properties of $K$ related to the conjugacy problem
are described in
\cite[Section 2.3]{Lysenok_Miasnikov_Ushakov:2010}. In particular, $[\Gamma:K]=16$ and
there is a set $L\subseteq\Gamma/K\times\Gamma/K$
such that a pair $(w_0,w_1)\in\Gamma\times \Gamma$
has a $\psi$-preimage if and only if $(w_0K,w_1K)\in L$.
Furthermore,
for any $w=(w_0,w_1),w'=(w_0',w_1')$,
$$
(w_0K,w_1K)=(w_0'K,w_1'K) \in L
\ \ \Rightarrow \ \
wK=w'K.
$$
Hence, we can naturally define
a ``lifting'' function for $K$-cosets
$$
\lift:\CP(\Gamma/K\times\Gamma/K)\to\CP(\Gamma/K),
$$
where $\CP$ stands for a power-set as follows:
$$
\lift(S) = \Set{ wK }{w=(w_0,w_1) \mbox{ and } (w_0K,w_1K) \in S},
$$
for $S\subseteq \Gamma/K\times\Gamma/K$.
See \cite[Section 2.3]{Lysenok_Miasnikov_Ushakov:2010} for more detail.

For $u, v \in \Gamma$ define a (finite) set
$$
Q(u, v) = \Set{xK}{ u = x^{-1}vx } \subseteq \Gamma/K.
$$
Clearly, $u$ and $v$ are conjugate in $\Gamma$ if and only if $Q(u,v) \ne \emptyset$.
The next proposition
follows from Proposition \ref{pr:conj-systems}
and the definitions of $K$ and $\lift$.

\begin{proposition}[Cf. {{\cite[Lemma 4.3]{Lysenok_Miasnikov_Ushakov:2010}}}]
\label{pr:lifting}
For any $u,v \in X^*$ the following holds.
If $u=(u_0,u_1), v=(v_0,v_1) \in \St(1)$, then
\begin{equation}\label{eq:even-Q-formula}
Q(u,v)=
\lift[Q(u_0,v_0) \times Q(u_1,v_1)] \cup
\lift[Q(u_1,v_0) \times Q(u_0,v_1)]a.
\end{equation}
If $ua=(u_0,u_1), va=(v_0,v_1) \in \St(1)$, then
\begin{align}
\label{eq:odd-Q-formula}
Q(u,v)=&\lift\Set{(g, v_1 g u^{-1}_1)}{g \in Q(u_0u_1, v_0v_1)} \\
\cup & \lift\Set{(gu_1^{-1}, v_0^{-1}g)}{g \in Q(u_0u_1, v_0v_1)}a \nonumber.
\end{align}
Furthermore, it takes constant time to compute
$Q(u,v)$ in \eqref{eq:even-Q-formula} and
\eqref{eq:odd-Q-formula}
provided $u_0K,u_1K,v_0K,v_1K$ and
the $Q$-sets involved on the right-hand side.
\end{proposition}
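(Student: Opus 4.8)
The plan is to partition the conjugator set $C(u,v)=\Set{x\in\Gamma}{u=x^{-1}vx}$ according to whether $x\in\St(1)$ or not, to translate each part into a statement about $K$-cosets using Proposition \ref{pr:conj-systems}, and then to recognize the outcome as a $\lift$ of products of smaller $Q$-sets. Throughout I use that $K$ is normal in $\Gamma$, so that $\Gamma/K$ is a group of order $16$ in which products, inverses, right multiplication by $a$, and the set-valued map $\lift$ are all computable by table lookup.

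\emph{Even case.} Suppose $u=(u_0,u_1),v=(v_0,v_1)\in\St(1)$. A conjugator $x\in\St(1)$ splits as $x=(x_0,x_1)$, and Proposition \ref{pr:conj-systems}(1) turns $u=x^{-1}vx$ into the pair of conditions $u_0=x_0^{-1}v_0x_0$ and $u_1=x_1^{-1}v_1x_1$, i.e. $x_0K\in Q(u_0,v_0)$ and $x_1K\in Q(u_1,v_1)$. Since $x$ is a $\psi$-preimage of $(x_0,x_1)$, the pair $(x_0K,x_1K)$ lies in $L$ and $xK=\lift(x_0K,x_1K)$; this shows $\Set{xK}{x\in C(u,v)\cap\St(1)}\subseteq\lift[Q(u_0,v_0)\times Q(u_1,v_1)]$, and the reverse inclusion is obtained by choosing conjugator representatives for a pair of cosets in $Q(u_0,v_0)\times Q(u_1,v_1)$ that happens to lie in $L$, lifting it to an honest $x\in\St(1)$, and invoking Proposition \ref{pr:conj-systems}(1) in reverse. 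Conjugators $x\notin\St(1)$ are handled the same way through Proposition \ref{pr:conj-systems}(2) applied to $xa=(x_0,x_1)$: the equations force $(x_0K,x_1K)\in Q(u_1,v_0)\times Q(u_0,v_1)$, and as $xK=(xa)K\,a$ they contribute the term $\lift[Q(u_1,v_0)\times Q(u_0,v_1)]\,a$. Taking the union yields \eqref{eq:even-Q-formula}.

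\emph{Odd case.} Suppose $ua=(u_0,u_1),va=(v_0,v_1)\in\St(1)$. For $x\in\St(1)$, Proposition \ref{pr:conj-systems}(3) gives that $x_0$ conjugates $v_0v_1$ to $u_0u_1$ and that $x_1=v_1x_0u_1^{-1}$; writing $g=x_0K\in Q(u_0u_1,v_0v_1)$ we obtain $(x_0K,x_1K)=(g,v_1gu_1^{-1})$, which is the first summand of \eqref{eq:odd-Q-formula}. For $x\notin\St(1)$, Proposition \ref{pr:conj-systems}(4) applied to $xa=(x_0,x_1)$ gives that $x_0$ conjugates $v_0v_1$ to $u_1u_0$ and that $x_1=v_1x_0u_0^{-1}$. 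The index here is $u_1u_0$ rather than $u_0u_1$, and I reconcile the two using $u_1u_0=u_1(u_0u_1)u_1^{-1}$: this shows $x_0u_1$ conjugates $v_0v_1$ to $u_0u_1$, so setting $g=(x_0u_1)K\in Q(u_0u_1,v_0v_1)$ gives $x_0K=gu_1^{-1}$. Substituting into $x_1=v_1x_0u_0^{-1}$ and using the coset identity $g^{-1}(v_0v_1)g=u_0u_1$ in $\Gamma/K$, a short computation rewrites $x_1K=v_1gu_1^{-1}u_0^{-1}$ as $v_0^{-1}g$. Since $xK=(xa)K\,a$, this produces the second summand $\lift\Set{(gu_1^{-1},v_0^{-1}g)}{g\in Q(u_0u_1,v_0v_1)}\,a$.

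In both cases the backward inclusions are the delicate point: starting from an element of the right-hand side one must recover a genuine conjugator $x\in\Gamma$, and this is exactly where the property of $L$ is used --- a pair of cosets lying in $L$ is guaranteed to admit a $\psi$-preimage, and running Proposition \ref{pr:conj-systems} in reverse certifies that this preimage conjugates $v$ to $u$. I expect the odd non-stabilizer subcase to be the main obstacle: one must check that $g\mapsto gu_1^{-1}$ is the correct reparametrization matching the index sets $Q(u_1u_0,v_0v_1)$ and $Q(u_0u_1,v_0v_1)$, and that the simplification $v_1gu_1^{-1}u_0^{-1}=v_0^{-1}g$ is valid in $\Gamma/K$ rather than merely in $\Gamma$. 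Finally, the constant-time claim is immediate once the $Q$-sets on the right are given: each is a subset of the $16$-element group $\Gamma/K$, and every operation appearing in \eqref{eq:even-Q-formula} and \eqref{eq:odd-Q-formula}---a bounded number of coset products, inversions, multiplications by $a$, Cartesian products, unions, and $\lift$ evaluations---reduces to lookups in precomputed finite tables.
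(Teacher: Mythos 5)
Your proof is correct and takes exactly the route the paper intends: the paper gives no detailed argument for this proposition, stating only that it follows from Proposition \ref{pr:conj-systems} and the definitions of $K$ and $\lift$ (citing Lemma 4.3 of \cite{Lysenok_Miasnikov_Ushakov:2010}), and your derivation is precisely that reasoning carried out in full. In particular, your case split over conjugators inside and outside $\St(1)$, the use of the defining property of $L$ for the reverse inclusions, the reparametrization $g=(x_0u_1)K$ in the odd non-stabilizer case, and the coset simplification $v_1gu_1^{-1}u_0^{-1}=v_0^{-1}g$ modulo $K$ all check out.
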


\subsection{Splitting tree $T(w)$}
\label{se:splitting_tree}

Proposition \ref{pr:lifting} reduces the problem
of computing $Q(u,v)$ to the problem
of computing the following sets:
\begin{itemize}
\item
$Q(u_0,v_0),Q(u_0,v_1),Q(u_1,v_0),Q(u_1,v_1)$ if $u,v\in\St(1)$;
\item
$Q(u_0u_1,v_0v_1)$ if $u,v\notin\St(1)$.
\end{itemize}
This naturally defines trees $T(u)$ and $T(v)$,
called \emph{splitting trees}.
The \emph{splitting tree} $T(w)$
for a given $w\in\CR$
is a directed tree $(V,E)$
equipped with a labeling function $\mu:V\to\CR$,
with the root denoted by $root(T(w))\in V$, satisfying
the following conditions:
\begin{itemize}
\item
$\mu(root(T(w)))=w$;
\item
if $|w|\le 1$, then $T(w)$ has a single vertex;
\item
if $w=(w_0,w_1)\in\St(1)$, then $T(w)$
is obtained by connecting a single vertex
labeled with $w$
to the roots of $T(w_0)$ and $T(w_1)$;
\item
if $w=(w_0,w_1)a\notin\St(1)$, then $T(w)$
is obtained by connecting a single vertex
labeled with $w$
to the root of $T(w_0w_1)$.
\end{itemize}
Define $T_9(w)$ to be a subtree of $T(w)$
induced by a subset of vertices
$$
\{v\in V(T(w))\mid \|\mu(v)\|\ge 9\}.
$$
It is straightforward to check that $T_9(w)$ is connected
using Table \ref{tab:w9}.
Define the \emph{total norm} of a labelled graph $T$
to be
$$
\|T\| = \sum_{v\in V(T)} \|\mu(v)\|.
$$
Denote by $\Ch_T(v)=\Ch_T^{(1)}(v) \subseteq V(T)$
the set of children of a vertex $v$ in a
labelled graph $T$.
Inductively, define $\Ch_T^{(k)}(v)$ to be the set of children
of vertices from $\Ch_T^{(k-1)}(v)$.

\begin{table}[h]
\centering
\begin{tabular}{|l|l|l||l|l|l||l|l|l|}
\hline
$w$ & $\|w\|$ & desc&
$w$ & $\|w\|$ & desc&
$w$ & $\|w\|$ & desc\\
\hline
\hline
$\varepsilon$ & 0 &   & $adadad$ & 7.4037 & $b$ & $cadac$ & 6.7998 & $aba$, $\varepsilon$ \\
\hline
$a$ & 1.7559 &   & $b$ & 2 &   & $cadaca$ & 8.5557 & $aba$ \\
\hline
$ab$ & 3.7559 & $ca$ & $ba$ & 3.7559 & $ac$ & $cadad$ & 6.2238 & $ab$, $c$ \\
\hline
$aba$ & 5.5118 & $c$, $a$ & $bab$ & 5.7559 & $\varepsilon$ & $cadada$ & 7.9797 & $ad$ \\
\hline
$abab$ & 7.5118 & $ca$, $ac$ & $baba$ & 7.5118 & $ac$, $ca$ & $cadadad$ & 8.6917 & $ac$ \\
\hline
$abac$ & 6.7998 & $ca$, $ad$ & $babac$ & 8.7998 & $aca$, $cad$ & $d$ & 0.712 &   \\
\hline
$abaca$ & 8.5557 & $b$ & $babad$ & 8.2238 & $ac$, $cab$ & $da$ & 2.4679 & $b$ \\\hline
$abad$ & 6.2238 & $c$, $ab$ & $bac$ & 5.0439 & $aba$ & $dab$ & 4.4679 & $da$ \\
\hline
$abada$ & 7.9797 & $cab$ & $baca$ & 6.7998 & $ad$, $ca$ & $daba$ & 6.2238 & $c$, $ba$ \\
\hline
$abadad$ & 8.6917 & $dab$ & $bacab$ & 8.7998 & $ada$, $cac$ & $dabab$ & 8.2238 & $ca$, $bac$ \\
\hline
$ac$ & 3.0439 & $da$ & $bacac$ & 8.0878 & $ada$, $cad$ & $dabac$ & 7.5118 & $ca$, $bad$ \\
\hline
$aca$ & 4.7998 & $d$, $a$ & $bacad$ & 7.5118 & $ad$, $cab$ & $dabad$ & 6.9358 & $c$, $bab$ \\
\hline
$acab$ & 6.7998 & $da$, $ac$ & $bad$ & 4.4679 & $ad$ & $dabada$ & 8.6917 & $dab$ \\
\hline
$acaba$ & 8.5557 & $b$ & $bada$ & 6.2238 & $ab$, $c$ & $dac$ & 3.7559 & $ca$ \\
\hline
$acac$ & 6.0878 & $da$, $ad$ & $badab$ & 8.2238 & $aba$, $\varepsilon$ & $daca$ & 5.5118 & $d$, $ba$ \\
\hline
$acaca$ & 7.8437 & $\varepsilon$ & $badac$ & 7.5118 & $aba$, $b$ & $dacab$ & 7.5118 & $da$, $bac$ \\
\hline
$acacad$ & 8.5557 & $dabad$ & $badad$ & 6.9358 & $ab$, $d$ & $dacac$ & 6.7998 & $da$, $bad$ \\
\hline
$acad$ & 5.5118 & $d$, $ab$ & $badada$ & 8.6917 & $ac$ & $dacaca$ & 8.5557 & $dabad$ \\
\hline
$acada$ & 7.2677 & $dab$ & $c$ & 1.288 &   & $dacad$ & 6.2238 & $d$, $bab$ \\
\hline
$acadac$ & 8.5557 & $aba$ & $ca$ & 3.0439 & $ad$ & $dacada$ & 7.9797 & $cab$ \\
\hline
$acadad$ & 7.9797 & $cab$ & $cab$ & 5.0439 & $aba$ & $dacadad$ & 8.6917 & $dab$ \\
\hline
$ad$ & 2.4679 & $b$ & $caba$ & 6.7998 & $ac$, $da$ & $dad$ & 3.1799 & $\varepsilon$ \\
\hline
$ada$ & 4.2238 & $b$, $\varepsilon$ & $cabab$ & 8.7998 & $aca$, $dac$ & $dada$ & 4.9358 & $b$, $b$ \\
\hline
$adab$ & 6.2238 & $ba$, $c$ & $cabac$ & 8.0878 & $aca$, $dad$ & $dadab$ & 6.9358 & $ba$, $d$ \\
\hline
$adaba$ & 7.9797 & $bac$ & $cabad$ & 7.5118 & $ac$, $dab$ & $dadaba$ & 8.6917 & $bad$ \\
\hline
$adabad$ & 8.6917 & $bad$ & $cac$ & 4.3319 & $\varepsilon$ & $dadac$ & 6.2238 & $ba$, $c$ \\
\hline
$adac$ & 5.5118 & $ba$, $d$ & $caca$ & 6.0878 & $ad$, $da$ & $dadaca$ & 7.9797 & $bac$ \\
\hline
$adaca$ & 7.2677 & $bad$ & $cacab$ & 8.0878 & $ada$, $dac$ & $dadacad$ & 8.6917 & $bad$ \\
\hline
$adacac$ & 8.5557 & $b$ & $cacac$ & 7.3758 & $ada$, $dad$ & $dadad$ & 5.6478 & $b$, $\varepsilon$ \\
\hline
$adacad$ & 7.9797 & $bac$ & $cacad$ & 6.7998 & $ad$, $dab$ & $dadada$ & 7.4037 & $b$ \\
\hline
$adad$ & 4.9358 & $b$, $b$ & $cacada$ & 8.5557 & $b$ & $dadadac$ & 8.6917 & $ca$ \\
\hline
$adada$ & 6.6917 & $\varepsilon$ & $cad$ & 3.7559 & $ac$ & $dadadad$ & 8.1157 & $\varepsilon$ \\
\hline
$adadab$ & 8.6917 & $ca$ & $cada$ & 5.5118 & $ab$, $d$ & & & \\
\hline
$adadac$ & 7.9797 & $da$ & $cadab$ & 7.5118 & $aba$, $b$ & & & \\
\hline
\end{tabular}
\caption{Words $w$ satisfying $\|w\|<9$ and their immediate descendants.}
\label{tab:w9}
\end{table}

\begin{lemma}\label{le:total-norm_conj}
For any $w\in\CR$ the following holds:
\begin{itemize}
\item[(a)]
If $\|w\|< 9$, then $\|T(w)\| < 30$.
\item[(b)]
If $\|w\|\ge 9$, then $\|T_9(w)\| \le 35 \|w\|$.
\item[(c)]
If $\|w\|\ge 9$, then $|V(T_9(w))| \le 4 \|w\|$.
\item[(d)]
$\|T(w)\| \le 275 \|w\|$.
\item[(e)]
$\sum_{v\in V(T)} |\mu(v)| \le 800|w|.$
\end{itemize}
\end{lemma}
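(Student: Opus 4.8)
The plan is to prove the five parts in the order (a)$\to$(b)$\to$(c)$\to$(d)$\to$(e), since each one feeds the next, with Proposition \ref{pr:norm-bound} and the inequalities \eqref{eq:norm_properties} as the only inputs.

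For part (a) I would treat Table \ref{tab:w9} as a finite directed graph on the reduced words of norm less than $9$: the ``desc'' column records exactly the labels of the children of $w$ in $T(w)$ (one child when $w\notin\St(1)$, two when $w\in\St(1)$), and every listed child has strictly smaller norm than its parent. Consequently $T(w)$ for such a $w$ is a finite tree all of whose labels already appear in the table (the leaves being $\varepsilon,a,b,c,d$), and $\|T(w)\|$ is computed bottom-up by the recursion $\|T(w)\| = \|w\| + \sum_d \|T(d)\|$, the sum running over the immediate descendants $d$ of $w$. Evaluating this recursion over the whole (finite) table and checking that the maximum stays below $30$ is the one genuinely computational step. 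I would also verify, as part of this step, that the table is closed under taking children, so that the recursion never leaves it.

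For part (b) I would group the vertices of $T_9(w)$ by their distance $k$ from the root and set $S_k$ to be the sum of $\|\mu(v)\|$ over the level-$k$ vertices of $T_9(w)$. The key local estimate is that for a vertex $v$ with label $u=\mu(v)$ satisfying $\|u\|\ge 9$, the total norm of the children of $v$ in $T(w)$ is at most $\|u\|/1.03$: if $u\in\St(1)$ the children are labelled $u_0,u_1$ and this is Proposition \ref{pr:norm-bound}(a) verbatim, while if $u\notin\St(1)$ the single child is labelled $\red(u_0u_1)$, of norm at most $\|u_0\|+\|u_1\|\le \|u\|/1.03$ by subadditivity of the norm and Proposition \ref{pr:norm-bound}(a). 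Since $T_9(w)$ is connected, every level-$(k+1)$ vertex of $T_9(w)$ is a child of a level-$k$ vertex, hence $S_{k+1}\le S_k/1.03$ and $S_k\le \|w\|/1.03^k$, giving
\[
\|T_9(w)\| = \sum_{k\ge 0} S_k \le \|w\| \sum_{k\ge 0} 1.03^{-k} = \tfrac{1.03}{0.03}\|w\| < 35\|w\|.
\]
Part (c) is then immediate: each vertex of $T_9(w)$ carries a label of norm $\ge 9$, so $9\,|V(T_9(w))| \le \|T_9(w)\| \le 35\|w\|$ and $|V(T_9(w))| \le 4\|w\|$.

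For part (d) I would decompose $T(w)$ into $T_9(w)$ together with the maximal subtrees of norm-$<9$ vertices hanging below it. Each such subtree is rooted at a ``boundary'' vertex $v$ with $\|\mu(v)\|<9$ whose parent lies in $T_9(w)$, and by part (a) it has total norm below $30$; moreover all its vertices have norm $<9$ because norms strictly decrease downward. Since every vertex of $T_9(w)$ has at most two children, the number of boundary vertices is at most $2|V(T_9(w))|\le 8\|w\|$, so for $\|w\|\ge 9$
\[
\|T(w)\| \le \|T_9(w)\| + 30\cdot 8\|w\| \le 35\|w\| + 240\|w\| = 275\|w\|;
\]
for $0<\|w\|<9$ the claim holds because $\|T(w)\|<30\le 275\gamma_d\le 275\|w\|$, and $\|T(\varepsilon)\|=0$. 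Finally part (e) is a change of units: by \eqref{eq:norm_properties}, $\sum_v |\mu(v)| < \tfrac{1}{0.7}\sum_v\|\mu(v)\| = \tfrac{1}{0.7}\|T(w)\| \le \tfrac{275}{0.7}\|w\| \le \tfrac{550}{0.7}|w| < 800|w|$, using $\|w\|\le 2|w|$ in the last inequality.

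The main obstacle is the finite verification in part (a): one must confirm both closure of Table \ref{tab:w9} under children and that the accumulated total norms never reach the threshold $30$. Everything after that is short and deterministic, the geometric decay $S_{k+1}\le S_k/1.03$ being the structural heart of the argument. I would pay particular attention to the constant bookkeeping in (d), where $275 = 35 + 240$ is tight and leaves no slack, so the bounds ``$<30$'' from (a) and ``$\le 4\|w\|$'' from (c) must be invoked with exactly their stated constants.
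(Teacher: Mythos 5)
Your proposal is correct and follows essentially the same route as the paper's proof: the finite check over Table \ref{tab:w9} for (a), the geometric-decay bound $\sum_k \|w\|/1.03^k \le \tfrac{1.03}{0.03}\|w\|$ from Proposition \ref{pr:norm-bound} for (b), the norm-$\ge 9$ counting argument for (c), the decomposition of $T(w)$ into $T_9(w)$ plus at most two norm-$<30$ subtrees per vertex for (d), and the rescaling via \eqref{eq:norm_properties} for (e). In fact you supply two small details the paper leaves implicit — the estimate $\|\red(u_0u_1)\|\le\|u_0\|+\|u_1\|$ needed to apply Proposition \ref{pr:norm-bound} at non-stabilizer vertices in (b), and the trivial case $\|w\|<9$ of (d) — so no objections.
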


\begin{proof}
Table \ref{tab:w9} enumerates all words $w$ satisfying $\|w\|<9$
and their immediate descendants in $T(w)$.
It is straightforward to check that
$\|T(w)\| < 30$ holds for every word in the table.

Next, it follows from Proposition \ref{pr:norm-bound} that
for every vertex $v\in T_9(w)$
$$
\sum_{v' \in \Ch_{T_9(w)}(v)} \|\mu(v')\|
\le \frac{||\mu(v)||}{1.03},
$$
and for every $k\in\MN$
$$
\sum_{v' \in \Ch_{T_9(w)}^{(k)}(v)} \|\mu(v')\|
\le \frac{||\mu(v)||}{1.03^k}.
$$
Hence, the total norm of $T_9(w)$ is bounded above by
$$
\sum_{i=0}^{\infty} \frac{||w||}{1.03^i} = ||w||\frac{1}{1-\frac{1}{1.03}}
= \frac{1.03}{0.03}\cdot ||w|| \le 35\|w\|,
$$
and, hence, (b) holds.
Each vertex in $T_9(w)$ is labelled with a word of norm greater than or equal to $9$.
Hence,
$$
|V(T_9(w))| \le \tfrac{35}{9} \|w\| < 4\|w\|,
$$
and (c) holds.
Every vertex $v\in T_9(w)$ has at most two descendants
of norm less than $9$.
Therefore,
\begin{align*}
\|T(w)\|
&\le
\sum_{v\in V(T_9(w))} \rb{\|\mu(v)\| + 2\cdot 30}
\le
\|T_9(w)\| +  60 |V(T_9(w))|\\
&\le 35\|w\| + 240\|w\| = 275\|w\|,
\end{align*}
and (d) holds.
Finally, it follows from
inequalities \eqref{eq:norm_properties} and item (d)
that
$$
\sum_{v\in V(T)} |\mu(v)|
\le \sum_{v\in V(T)} \tfrac{1}{0.7}\|\mu(v)\|
\le \frac{275}{0.7}\|w\|
< 400\|w\|
\le 800|w|,
$$
and, hence, (e) holds.
\end{proof}

\begin{proposition}\label{pr:descendant_length}
For any edge $u\to v$ in $T(w)$ we have
\begin{equation}\label{eq:ineq1}
|\mu(u)| \ge |\mu(v)|.
\end{equation}
Furthermore, for any path
$u\to u'\to u''\to u'''$ in $T(w)$ we have
\begin{equation}\label{eq:ineq2}
|\mu(u)| > |\mu(u''')|.
\end{equation}
\end{proposition}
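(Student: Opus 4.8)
The plan is to reduce both inequalities to a single bookkeeping fact about how the letters of a reduced word are distributed between its two $\psi$-components. Write $w\in\CR$ as an alternating word, so that its non-$a$ letters are separated by single $a$'s and each letter $t\in\{b,c,d\}$ of $w$ occurs in a well-defined \emph{state} equal to the parity of the number of $a$'s to its left. Using the given values of $\psi$ on the generators of $\St(1)$, one reads off that in state $0$ a letter $t$ contributes the two coordinates of $\psi(t)$ to the (unreduced) pair $(\phi_0(w),\phi_1(w))$, while in state $1$ it contributes the coordinates of $\psi(ata)$, which is $\psi(t)$ with its coordinates swapped. Two consequences will be used repeatedly: (i) each non-$a$ letter of $w$ contributes \emph{at most one} letter to each coordinate; and (ii) summed over all letters, the two coordinates together receive exactly $\delta_b(w)+\delta_c(w)$ letters $a$ and $\delta_b(w)+\delta_c(w)+\delta_d(w)$ non-$a$ letters, for a total unreduced length $2\delta_b(w)+2\delta_c(w)+\delta_d(w)$.

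First I would prove \eqref{eq:ineq1}. If $\mu(u)=w\in\CR^e$ (the even case), the child is some $w_i=\red(\phi_i(w))$, and by (i) the unreduced $\phi_i(w)$ has at most $k:=\delta_b(w)+\delta_c(w)+\delta_d(w)$ letters; since reduction never increases length, $|w_i|\le k$. As $w\in\St(1)$ has $\delta_a(w)$ even and $|w|\ge 2$ forces $\delta_a(w)\ge 2$, we get $|w_i|\le k=|w|-\delta_a(w)\le |w|-2<|w|$, so even-case edges in fact drop the length by at least $2$. In the odd case the child is $\red(g_0g_1)$ where $(g_0,g_1)=\psi(\red(wa))$; since $\red(wa)$ has the same non-$a$ letters as $w$, monotonicity of $\red$ and (ii) give $|\red(g_0g_1)|\le 2\delta_b(w)+2\delta_c(w)+\delta_d(w)=2k-\delta_d(w)$, whereas $|w|=\delta_a(w)+k$ with $\delta_a(w)\ge k-1$. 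Comparing the two bounds, the desired $|\red(g_0g_1)|\le |w|$ follows at once \emph{unless} $\delta_a(w)=k-1$ and $\delta_d(w)=0$.

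This single tight case is the main obstacle for \eqref{eq:ineq1}. Here $\delta_a(w)=k-1<k$ forces $w=t_1at_2\cdots at_k$ to begin and end with non-$a$ letters, all $t_i\in\{b,c\}$, and since $\delta_a(w)=k-1$ is odd the count $k$ is even. I would finish by inspecting the junction of $g_0$ and $g_1$: the state pattern ($\epsilon_i\equiv i-1 \bmod 2$) shows that $g_0$ ends with the state-$1$ contribution of $t_k$ and $g_1$ begins with the state-$0$ contribution of $t_1$, both of which are non-$a$ letters in $\{c,d\}$. Hence forming $g_0g_1$ creates an adjacency of two non-$a$ letters, which must reduce, saving at least one letter and giving $|\red(g_0g_1)|\le 2k-1=|w|$. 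This settles \eqref{eq:ineq1}, and it also records that equality can occur only at odd-case edges whose source contains no $d$.

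Finally, for \eqref{eq:ineq2} I would show that three length-preserving edges cannot occur consecutively. An even-case edge strictly drops the length, so any length-preserving edge is odd-case; moreover a refinement of the tight-case analysis shows that a length-preserving odd-case edge forces $\delta_d$ of its source to vanish and permits total reduction of at most one letter. The letter types then propagate cyclically under one splitting step, $b\mapsto c\mapsto d$, so a letter $b$ of $\mu(u)$ produces a $c$ in $\mu(u')$ and a $d$ in $\mu(u'')$; the edge out of $u''$ would then have a source containing a $d$ and so cannot preserve length. Tracking the few ways the single permitted reduction can remove the intermediate $c$'s and $d$'s (a short finite check) shows the same conclusion when $\mu(u)$ carries $c$'s rather than $b$'s. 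Thus at least one of the three consecutive edges strictly decreases the length, and together with \eqref{eq:ineq1} this yields $|\mu(u)|>|\mu(u''')|$. I expect the junction and internal reduction bookkeeping in the tight case to be the crux of the whole argument.
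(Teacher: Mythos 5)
Your proof is correct and takes essentially the same route as the paper's: your contribution-and-state bookkeeping is the paper's regular-expression case analysis in different clothing (your tight case $\delta_a(w)=k-1$, $\delta_d(w)=0$ is exactly the paper's case (r4), resolved by the same junction-cancellation observation that the two halves meet in non-$a$ letters), and your argument for \eqref{eq:ineq2} --- a $d$ in the source forces a strict drop, while letter types shift cyclically $b\mapsto c\mapsto d$ --- is precisely the paper's argument. The only differences are organizational: you count contributions uniformly where the paper enumerates four word shapes, and your ``at most one reduction'' refinement spells out what the paper compresses into ``similar analysis.''
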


\begin{proof}
Denote $\mu(u)$ by $w$ and consider two cases.
If $w \in \{\varepsilon,a,b,c,d\}$, then
$u$ has no children.
If $w\in\CR^e \setminus\{\varepsilon,a,b,c,d\}$, then
$|w|\ge 3$, $u$ has two children $u_0,u_1$ satisfying
$$
|\mu(u_0)|,|\mu(u_1)|
\le
\left\lfloor \frac{|w|+1}{2} \right\rfloor
<|w|
$$
and the statement follows.

If $w\in\CR\setminus\CR^e\setminus\{\varepsilon,a,b,c,d\}$,
then $w = (w_0,w_1)a$,
$u$ has a single child labeled with $w_0w_1$, and
$w$ belongs to one of the sets defined by the following
regular expressions:
\begin{itemize}
\item[(r1)]
$w\in a ((b|c|d)a(b|c|d)a)^\ast$, in which case
$|w| = 4n+1$ and $|w_0w_1|\le 4n$;
\item[(r2)]
$w\in a (b|c|d) (a(b|c|d)a(b|c|d))^\ast$, in which case
$|w| = 4n+2$ and $|w_0w_1|\le 4n+2$;
\item[(r3)]
$w\in (b|c|d) a ((b|c|d)a(b|c|d)a)^\ast$, in which case
$|w| = 4n+2$ and $|w_0w_1|\le 4n+2$;
\item[(r4)]
$w\in (b|c|d) a (b|c|d) (a(b|c|d)a(b|c|d))^\ast$, in which case
$|w| = 4n+3$ and $|w_0w_1|\le 4n+3$
because $w_0$ ends with a letter in $\{b,c,d\}$
and $w_1$ starts with a letter in $\{b,c,d\}$.
\end{itemize}
Hence, in all cases, \eqref{eq:ineq1} holds.

Similar analysis proves the following.
If $\mu(u)$ contains the letter $d$,
then $|\mu(u)| > |\mu(v)|$.
If $w$ does not contain $d$, but contains $c$,
and $|\mu(u)| = |\mu(v)|$, then $v$ contains $d$
and, hence, children of $v$ have shorter labels.
Same way we can analyse words $w$ that contain
$a$'s and $b$'s only.
\end{proof}

\subsection{Lifting conjugacy classes}

For $v\in\Gamma$ by $c_v$ we denote the conjugacy class of $v$
$$
c_v=\Set{v'\in\Gamma}{v'\sim_\Gamma v},
$$
and by $c$ the corresponding map $v\stackrel{c}{\mapsto} c_v$.
Also, for $v\in\Gamma$ we define a set
$P_v \subseteq \Gamma$ as follows:
$$
P_v =
\begin{cases}
\Set{u=(u_0,u_1)}{u_0\sim_\Gamma v_0,\ u_1\sim_\Gamma v_1}& \mbox{if }v=(v_0,v_1)\in\St(1),\\
\Set{u=(u_0,u_1)a}{u_0u_1\sim_\Gamma v_0v_1}& \mbox{if }v=(v_0,v_1)a\notin\St(1).
\end{cases}
$$
Theorem \ref{th:diff-conj-classes} implies
that for any $v\in\Gamma$ the conjugacy classes
of its children
almost uniquely define the conjugacy class of $v$.

\begin{theorem}\label{th:diff-conj-classes}
$|c(P_v)| \le 256$ for every $v\in\Gamma$.
\end{theorem}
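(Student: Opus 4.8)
The plan is to produce an invariant on $P_v$ that takes at most $256$ values and that already determines the conjugacy class, so that the map $c$ factors through it. For $u\in P_v$ set
$$
\kappa(u)=(u_0K,\,u_1K)\in\Gamma/K\times\Gamma/K,
$$
where $u=(u_0,u_1)$ when $v\in\St(1)$ and $u=(u_0,u_1)a$ when $v\notin\St(1)$ (all elements of a fixed $P_v$ are of the same type). Since $|\Gamma/K|=16$, the image $\kappa(P_v)$ has at most $16^2=256$ elements. Thus the theorem reduces to the following Key Lemma: \emph{if $u,u'\in P_v$ and $\kappa(u)=\kappa(u')$, then $u\sim_\Gamma u'$.} Granting this, $c|_{P_v}$ factors through $\kappa$, so $|c(P_v)|\le|\kappa(P_v)|\le 256$.

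To prove the Key Lemma I would use the $Q$-set machinery, recalling that $u\sim_\Gamma u'$ iff $Q(u,u')\ne\emptyset$. Fix $u,u'\in P_v$ with $\kappa(u)=\kappa(u')$. Because both lie in $P_v$, their children are respectively conjugate: when $v\in\St(1)$ we get $u_0\sim_\Gamma v_0\sim_\Gamma u_0'$ and $u_1\sim_\Gamma v_1\sim_\Gamma u_1'$, so $Q(u_0,u_0')$ and $Q(u_1,u_1')$ are nonempty; when $v\notin\St(1)$ we get $u_0u_1\sim_\Gamma v_0v_1\sim_\Gamma u_0'u_1'$, so $Q(u_0u_1,u_0'u_1')$ is nonempty. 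In the first case \eqref{eq:even-Q-formula} gives $Q(u,u')\supseteq\lift[\,Q(u_0,u_0')\times Q(u_1,u_1')\,]$, and in the second \eqref{eq:odd-Q-formula} gives $Q(u,u')\supseteq\lift\{(g,\,u_1'gu_1^{-1}):g\in Q(u_0u_1,u_0'u_1')\}$. In either case it suffices to show the indicated $\lift$ is nonempty, i.e. that one can choose conjugator cosets of the children landing in the set $L\subseteq\Gamma/K\times\Gamma/K$ of liftable pairs.

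The crux, and the main obstacle, will be to verify that these candidate cosets always meet $L$. The hypothesis $\kappa(u)=\kappa(u')$ is exactly what confines the candidates: when $v\in\St(1)$ it forces $Q(u_0,u_0')\subseteq C_{\Gamma/K}(u_0'K)$ and $Q(u_1,u_1')\subseteq C_{\Gamma/K}(u_1'K)$, while when $v\notin\St(1)$, using $u_1'K=u_1K$, it forces the second coordinate to be the twisted translate $\overline{u_1}\,\bar g\,\overline{u_1}^{-1}$ of the first. I would finish by a direct inspection of the finite group $\Gamma/K$ of order $16$, together with explicit descriptions of $L$ and of the centralizers that occur, as in \cite[Section 2.3]{Lysenok_Miasnikov_Ushakov:2010}, checking in each configuration that the relevant coset intersects $L$. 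This finite case analysis in $\Gamma/K$ is the heart of the argument; everything else is bookkeeping with Propositions \ref{pr:conj-systems} and \ref{pr:lifting}. Finally, the swap symmetry $(u_0,u_1)\mapsto(u_1,u_0)$ coming from Proposition \ref{pr:conj-systems}(2) only identifies further classes and so cannot violate the upper bound.
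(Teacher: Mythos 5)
Your reduction is valid as far as it goes: \emph{if} the Key Lemma were true, then $c|_{P_v}$ would factor through $\kappa$ and the bound $16^2=256$ would follow. But the Key Lemma is exactly where the proposal stops being a proof, and the plan you give for it cannot close the gap. The hypotheses ``$u,u'\in P_v$ and $\kappa(u)=\kappa(u')$'' tell you only that $Q(u_0,u_0')$ and $Q(u_1,u_1')$ are \emph{nonempty} subsets of $C_{\Gamma/K}(u_0'K)$ and $C_{\Gamma/K}(u_1'K)$; they do not determine these sets. Writing $\pi:\Gamma\to\Gamma/K$ for the natural projection, $Q(u_0,u_0')$ is the image $\pi(C_\Gamma(u_0')x_0)=\pi(C_\Gamma(u_0'))\cdot x_0K$ of a coset of the \emph{centralizer in the infinite group} $\Gamma$, and $\pi(C_\Gamma(u_0'))$ can be a proper subgroup of $C_{\Gamma/K}(u_0'K)$ --- as small as $\gp{u_0'K}$ when $C_\Gamma(u_0')$ is essentially cyclic. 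In the extreme case both $Q$-sets are singletons $\{x_0K\}$, $\{x_1K\}$, and your Key Lemma then demands $(x_0K,x_1K)\in L$ (or the swapped analogue); nothing in the hypotheses forces this, since $L$ meets the product of the two quotient centralizers in a proper subset. Consequently the proposed ``direct inspection of the finite group $\Gamma/K$'' is not a finite case analysis at all: the cases are indexed by which subsets of $\Gamma/K$ actually arise as $Q$-sets under your constraints, and that is governed by the images of centralizers of arbitrary elements of $\Gamma$ --- information contained neither in $\Gamma/K$, nor in $L$, nor in \cite[Section 2.3]{Lysenok_Miasnikov_Ushakov:2010}. It is not even clear that the Key Lemma is true: it is a substantially stronger structural claim than the theorem (equality of child cosets would \emph{certify} conjugacy inside $P_v$), whereas the paper deliberately produces no such canonical certificate --- recall its representatives $w^\ast$ are explicitly non-canonical.

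The paper's proof is built to avoid exactly this obstruction. For $u\in P_v$ it considers not the cosets of the children but the set $T(v,u)\subseteq\Gamma/K\times\Gamma/K$ of \emph{conjugator} cosets, i.e.\ solutions modulo $K$ of the system \eqref{eq:even-by-even-system} or \eqref{eq:odd-by-even-system}; membership $u\in P_v$ makes $T(v,u)\ne\emptyset$. Choosing one element of $T(v,u_i)$ for each of $257$ elements $u_i\in P_v$, pigeonhole in the $256$-element set $\Gamma/K\times\Gamma/K$ gives $i\ne j$ with $T(v,u_i)\cap T(v,u_j)\ne\emptyset$; the composition and inversion rules $T(v,w)T(u,v)\subseteq T(u,w)$ and $T(u,v)=T(v,u)^{-1}$ then put $(1,1)$ in $T(u_i,u_j)$, and since the trivial pair $(K,K)$ is always liftable, Proposition \ref{pr:lifting} yields $Q(u_i,u_j)\ne\emptyset$, i.e.\ $u_i\sim_\Gamma u_j$. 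So the number $256$ enters as the size of the codomain of conjugator data, not of your invariant $\kappa$, and liftability comes for free because the composed conjugator is trivial mod $K$ --- no knowledge of centralizer images is ever needed. If you want to repair your argument, this is the substitution to make: pigeonhole on conjugator cosets through the common element $v$, rather than trying to prove that equality of child cosets forces the child $Q$-sets to meet $L$.
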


\begin{proof}
For $u,v\in\Gamma$ define a set
$T(u,v)\subseteq \Gamma/K\times \Gamma/K$
as follows:
\begin{equation}\label{eq:T}
\begin{cases}
Q(u_0,v_0)\times Q(u_1,v_1), & \mbox{if } u=(u_0,u_1), v=(v_0,v_1), \\
\Set{(gK,v_1gu_1^{-1}K) }{ g\in Q(u_0u_1,v_0v_1)} & \mbox{if } u=(u_0,u_1)a, v=(v_0,v_1)a,\\
\emptyset&\mbox{otherwise.}
\end{cases}
\end{equation}
If $u,v\in\St(1)$, then $T(u,v)$ is a set
of solutions (modulo $K$) of the system
\begin{equation}\label{eq:even-by-even-system}
\left\{
\begin{array}{l}
u_0 = x_0^{-1} v_0 x_0, \\
u_1 = x_1^{-1} v_1 x_1.
\end{array}
\right.
\end{equation}
If $u,v\notin\St(1)$, then $T(u,v)$
is a set of solutions (modulo $K$) of the system
\begin{equation}\label{eq:odd-by-even-system}
\left\{
\begin{array}{l}
u_0 = x_0^{-1} v_0 x_1, \\
u_1 = x_1^{-1} v_1 x_0.
\end{array}
\right.
\end{equation}
Hence,
for any $u,v,w\in\St(1)$
or for any $u,v,w\notin\St(1)$
the following holds:
$$T
(u,v) = T(v,u)^{-1}
\ \ \mbox{ and } \ \
T(v,w)T(u,v) \subseteq T(u,w).
$$
Therefore, for any $v\in\Gamma$ and
$\{u_1.\ldots,u_{257}\} \subseteq P_v$
there are distinct $i,j$ such that
$T(v,u_i)\cap T(v,u_j)\ne\emptyset$, which
implies that $(1,1) \in T(u_i,u_j)$.
Thus, by Proposition \ref{pr:lifting},
$u\sim_\Gamma v$.
\end{proof}

\subsection{Trie}\label{se:trie}

Let us recall definition of a tree data structure called a \emph{trie}.
Consider a rooted tree $\Lambda = (V,E)$ with an edge labeling
function $\nu:E\to\{a,b,c,d\}$
and a function $m:V\to\{0,1\}$
that defines a subset of \emph{marked vertices} $\{v\in V\mid m(v)=1\}$.
The triple $(\Lambda,\nu,m)$ defines a set of words
$S(\Lambda,\nu,m)$
that can be read
starting from the root to the marked vertices.
We assume that for every vertex $v\in V$ and any $l\in\{a,b,c,d\}$
there is at most one edges leaving $v$ labeled with $l$, which
excludes multisets. The following lemma is obvious.

\begin{lemma}\label{le:array_of-words-add}
For any trie $(\Lambda,\nu,m)$ and a word $w$
it takes $O(|w|)$ time
\begin{itemize}
\item
to decide if $w\in S(\Lambda,\nu,m)$,
\item
to construct $(\Lambda',\nu',m')$ such that
$S(\Lambda',\nu',m') = S(\Lambda,\nu,m) \cup \{w\}$.
\end{itemize}
\qed
\end{lemma}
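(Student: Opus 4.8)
The plan is to exploit the fact that the edge-labeling alphabet $\{a,b,c,d\}$ has fixed size, so that each node can carry a constant-size lookup table of its outgoing edges. Concretely, I would represent each vertex $v\in V$ by its mark bit $m(v)$ together with an array of four child pointers indexed by $\{a,b,c,d\}$, where the entry for a letter $l$ points to the (unique, by the definition of a trie) child reached by the edge labeled $l$ if such an edge exists, and is null otherwise. With this representation, given a vertex and a letter $l$, locating the child along $l$ is an $O(1)$ operation.

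For the membership test, I would start at $root(\Lambda)$ and read $w=l_1\cdots l_n$ one letter at a time. At step $i$, standing at the current vertex, I consult its lookup table at $l_i$: if the corresponding edge is absent then no path spelling $w$ exists and $w\notin S(\Lambda,\nu,m)$, so the procedure halts and returns false; otherwise it descends to the indicated child. If all $n$ letters are consumed, the procedure arrives at a vertex $v$ and returns the value of $m(v)$, since by definition $w\in S(\Lambda,\nu,m)$ precisely when $w$ labels a root-to-marked-vertex path. Each of the $n=|w|$ steps costs $O(1)$, giving total time $O(|w|)$.

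For the insertion, I would run the same descent following existing edges for as long as possible. Let $l_1\cdots l_j$ be the longest prefix of $w$ that can be read in $\Lambda$, ending at a vertex $v_j$. For the remaining suffix $l_{j+1}\cdots l_n$ I would create fresh vertices $v_{j+1},\ldots,v_n$ and edges $v_{i-1}\to v_i$ labeled $l_i$, updating the lookup table of $v_{i-1}$ at the slot $l_i$ to point at the new vertex; this preserves the at-most-one-edge-per-label invariant because each newly used slot was previously empty. Finally I set $m(v_n)=1$. The resulting trie $(\Lambda',\nu',m')$ then reads exactly the old language together with $w$, i.e.\ $S(\Lambda',\nu',m')=S(\Lambda,\nu,m)\cup\{w\}$. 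The descent costs $O(j)$ and the at most $n-j$ node/edge creations cost $O(1)$ each, so the whole construction again runs in $O(|w|)$.

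The only real point requiring care, and hence the main (mild) obstacle, is justifying the $O(1)$ cost of the per-letter navigation step: this is exactly where the constancy of the alphabet $\{a,b,c,d\}$ is used, allowing a direct-indexed table rather than a search through the children. If one instead insisted on an alphabet-agnostic model, each step would carry a factor logarithmic in the alphabet size, but since here the alphabet is fixed of size four this factor is absorbed into the constant and both bounds remain $O(|w|)$.
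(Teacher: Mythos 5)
Your proof is correct and is exactly the standard argument the paper has in mind: the paper labels this lemma ``obvious'' and omits the proof entirely, and your write-up (constant-size alphabet giving $O(1)$ per-letter navigation, descent for membership, path extension plus marking for insertion) supplies precisely the routine details being taken for granted.
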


\begin{lemma}\label{le:array_of-words}
For any words $u_1,\ldots,u_k\in\CR$
it takes $O(|u_1|+\ldots+|u_k|)$ time to construct
$(\Lambda,\nu,m)$ such that
$S(\Lambda,\nu,m) = \{u_1,\ldots,u_k\}$.
\qed
\end{lemma}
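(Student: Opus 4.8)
The plan is to build the trie incrementally, inserting the words $u_1,\ldots,u_k$ one at a time and invoking Lemma \ref{le:array_of-words-add} at each step. First I would initialize $(\Lambda_0,\nu_0,m_0)$ to be the trivial trie consisting of a single root vertex, no edges, and no marked vertices, so that $S(\Lambda_0,\nu_0,m_0)=\emptyset$. Then, for $i=1,\ldots,k$ in turn, I would apply the second part of Lemma \ref{le:array_of-words-add} to the current trie $(\Lambda_{i-1},\nu_{i-1},m_{i-1})$ and the word $u_i$ to obtain a trie $(\Lambda_i,\nu_i,m_i)$ satisfying
$$
S(\Lambda_i,\nu_i,m_i)=S(\Lambda_{i-1},\nu_{i-1},m_{i-1})\cup\{u_i\}.
$$
A routine induction on $i$ then gives $S(\Lambda_i,\nu_i,m_i)=\{u_1,\ldots,u_i\}$, so the final trie $(\Lambda_k,\nu_k,m_k)$ satisfies $S(\Lambda_k,\nu_k,m_k)=\{u_1,\ldots,u_k\}$, as required.

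For the running time, I would observe that Lemma \ref{le:array_of-words-add} guarantees that the $i$-th insertion costs $O(|u_i|)$ time, and the initialization of the empty trie costs $O(1)$. Summing over all insertions yields a total of $\sum_{i=1}^{k}O(|u_i|)=O(|u_1|+\cdots+|u_k|)$ time, which is the claimed bound.

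There is essentially no serious obstacle here, since the algorithmic content is packaged entirely inside Lemma \ref{le:array_of-words-add}. The one point meriting care is the time accounting: one must confirm that the per-word cost is genuinely additive across insertions and that no amortized or hidden global overhead accumulates as the trie grows. This is immediate from the form of Lemma \ref{le:array_of-words-add}, because inserting $u_i$ only follows a path of length at most $|u_i|$ from the root and creates at most $|u_i|$ new vertices and edges, independently of the current size of the trie; hence the bounds add up and the linear time follows.
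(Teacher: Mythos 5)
Your proof is correct and follows exactly the argument the paper intends: the paper states this lemma without proof (marking it as immediate), precisely because it is the obvious iteration of Lemma \ref{le:array_of-words-add} starting from the empty trie, which is what you do. Your added remark that each insertion costs $O(|u_i|)$ independently of the current trie size (since insertion only walks a root path of length at most $|u_i|$ over the constant-size alphabet $\{a,b,c,d\}$) is the one point worth making explicit, and you make it correctly.
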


\subsection{Representatives of conjugacy classes}
\label{se:representatives}

Let $T(w) = (V,E)$ be the splitting tree for $w$.
Define sets
\begin{align*}
W &= \{\mu(v) \mid v\in V\} \cup \{\varepsilon,a,b,c,d\}, \\
P&=\{P_w \mid w\in W\},\\
\CC&=\{c_{w}\mid w\in W\}.
\end{align*}
By definition, $P_w$ is uniquely defined
by a pair $(c_{w_0},c_{w_1})$ if $w\in\St(1)$
and by $c_{w_0w_1}$ if $w\notin\St(1)$.
To efficiently work with conjugacy classes,
for each class $c_w\in\CC$ we choose a particular representative $w^\ast\in W$
and organize representatives in a $|P|\times 256$ table.
Each row corresponds to some $P_w$, contains
representatives of conjugacy classes in $c(P_w)$, and
is labelled with a pair $(w_0^\ast,w_1^\ast)$ if $w\in\St(1)$
or with $(w_0 w_1)^\ast$ if $w\notin\St(1)$.
Furthermore, for each $w\in W$ we keep
\begin{itemize}
\item
a reference to its representative $w^\ast$,
\item
a set $Q(w,w^\ast)$.
\end{itemize}
Initially, the table contains five rows corresponding to elements
$1,a,b,c,d$.
\begin{itemize}
\item
A row for $P_\varepsilon$ is defined by a pair $(\varepsilon,\varepsilon)$ and contains a single word $\varepsilon$ which represents the conjugacy class $c_\varepsilon$.
\item
A row for $P_a$ is defined by $\varepsilon$ and contains
a single word $a$ which represents the conjugacy class $c_a$.
\item
A row for $P_b$ is defined by a pair $(a,c)$ and contains
a single word $b$ which represents the conjugacy class $c_b$.
\item
Rows for $P_c$ and $P_d$ are defined in a similar fashion.
\end{itemize}
\begin{table}[h]
\centering
\begin{tabular}{c|c|c}
& label & representatives \\
\hline
$P_\varepsilon$ & $(\varepsilon,\varepsilon)$ & $\varepsilon$ \\
\hline
$P_a$ & $\varepsilon$ & $a$ \\
\hline
$P_b$ & $(a,c)$ & $b$ \\
\hline
$P_c$ & $(a,d)$ & $c$ \\
\hline
$P_d$ & $(\varepsilon,b)$ & $d$ \\
\hline
\end{tabular}
\caption{The initial table.}
\label{tab:initial-table}
\end{table}
In Sections \ref{se:processing-even-w} and
\ref{se:processing-odd-w}
we describe how to extend the table.

\subsubsection{Case-I: $w\in\CR^e$}
\label{se:processing-even-w}

Suppose that $w = (w_0,w_1)\in W\cap \CR^e$ and
$w_0,w_1$ are processed, i.e., their representatives
$w_0^\ast,w_1^\ast$ and the sets
$Q(w_0,w_0^\ast),Q(w_1,w_1^\ast)$ are computed.

If the table does not contain a row labelled with
$(w_0^\ast,w_1^\ast)$ and does not contain a row labelled with
$(w_1^\ast,w_0^\ast)$, then $w$ is not conjugate to any word in the table.
In that case we
add a new row labelled with $(w_0^\ast,w_1^\ast)$
with a single representative $w$;
set $w^\ast=w$; and compute $Q(w,w)$ as follows:
\begin{align*}
Q(w_0,w_0)&=Q(w_0,w_0^\ast)^{-1}Q(w_0,w_0^\ast),\\
Q(w_1,w_1)&=Q(w_1,w_1^\ast)^{-1}Q(w_1,w_1^\ast),\\
Q(w_0,w_1)&=Q(w_1,w_0^\ast)^{-1}Q(w_0,w_0^\ast),\\
Q(w_1,w_0)&=Q(w_0,w_1^\ast)^{-1}Q(w_1,w_1^\ast),
\end{align*}
\begin{equation}\label{eq:Qww}
Q(w,w)=
\lift[Q(w_0,w_0) \times Q(w_1,w_1)] \cup
\lift[Q(w_1,w_0) \times Q(w_0,w_1)]a.
\end{equation}
\begin{table}[h]
\centering
\begin{tabular}{c|c}
label & representatives \\
\hline
\hline
$(w_0^\ast,w_1^\ast)$ & $w$ \\
\hline
\end{tabular}
\label{tab:even-row}
\end{table}

If such a row exists, then use Proposition \ref{pr:lifting}
to check if one of its entries $w'=(w_0',w_1')$
is conjugate to $w$. Namely, compute the sets
\begin{align*}
Q(w_0,w_0')&=Q(w_0',w_0^\ast)^{-1}Q(w_0,w_0^\ast),\\
Q(w_1,w_1')&=Q(w_1',w_1^\ast)^{-1}Q(w_1,w_1^\ast),\\
Q(w_0,w_1')&=Q(w_1',w_0^\ast)^{-1}Q(w_0,w_0^\ast),\\
Q(w_1,w_0')&=Q(w_0',w_1^\ast)^{-1}Q(w_1,w_1^\ast),
\end{align*}
$$
Q(w,w')=
\lift[Q(w_0,w_0') \times Q(w_1,w_1')] \cup
\lift[Q(w_1,w_0') \times Q(w_0,w_1')]a.
$$
If $w$ is conjugate to some $w'$ (i.e., if $Q(w,w')\ne\emptyset$),
then set $w^\ast=w'$.
Otherwise, set $w^\ast=w$, add
$w$ to the row, and compute $Q(w,w)$ using \eqref{eq:Qww}.

\subsubsection{Case-II: $w\notin\CR^e$}
\label{se:processing-odd-w}

Assume that $w = (w_0,w_1)a\notin \St(1)$ and
$w_0w_1$ is processed, i.e., its representative
$(w_0w_1)^\ast$ and the set
$Q(w_0w_1,(w_0w_1)^\ast)$ are computed.
If the table does not contain a row labelled with $(w_0w_1)^\ast$,
then add a new row labelled with $(w_0w_1)^\ast$
with a single representative $w$, set $w^\ast=w$, compute the set
$$
Q(w_0w_1, w_0w_1)=Q(w_0w_1,(w_0w_1)^\ast)^{-1}Q(w_0w_1,(w_0w_1)^\ast)
$$
and compute $Q(w,w)$ using \eqref{eq:odd-Q-formula}:
\begin{align}
\label{eq:Qww2}
Q(w,w)=&\lift\Set{(g, w_1 g w^{-1}_1)}{g \in Q(w_0w_1, w_0w_1)} \\
\cup & \lift\Set{(gw_1^{-1}, w_0^{-1}g)}{g \in Q(w_0w_1, w_0w_1)}a.\nonumber
\end{align}

\begin{table}[h]
\centering
\begin{tabular}{c|c}
label & representatives \\
\hline
\hline
$(w_0 w_1)^\ast$ & $w$ \\
\hline
\end{tabular}
\label{tab:odd-row}
\end{table}

If such a row exists, then use Proposition \ref{pr:lifting}
to check if one of its entries $w'=(w_0',w_1')a$
is conjugate to $w$. Namely, compute the sets
\begin{align*}
Q(w_0w_1, w_0'w_1')=&Q(w_0'w_1',(w_0w_1)^\ast)^{-1}Q(w_0w_1,(w_0w_1)^\ast),\\
Q(w,w')=&\lift\Set{(g, w_1' g w^{-1}_1)}{g \in Q(w_0w_1, w_0'w_1')} \\
\cup & \lift\Set{(gw_1^{-1}, (w_0')^{-1}g)}{g \in Q(w_0w_1, w_0'w_1')}a.
\end{align*}
If $w$ is conjugate to $w'$ (i.e., if $Q(w,w')\ne\emptyset$),
then set $w^\ast=w'$.
Otherwise, set $w^\ast=w$, add a new entry $w$ to the row, and compute $Q(w,w)$
using \eqref{eq:Qww2}.

\subsection{The algorithm}

Now we can describe our algorithm to solve the conjugacy problem
in $\Gamma$. We start out with a short sketch of the algorithm.
To determine if $u\sim_\Gamma v$ we
construct the trees $T(u)$ and $T(v)$
and form the set of words
$$
W = \{\mu(s) \mid s\in V(T(u))\}
\cup \{\mu(s) \mid s\in V(T(v))\}
\cup \{\varepsilon,a,b,c,d\}.
$$
Sort $W$ in shortlex ascending order and get a list
$$
\{\varepsilon,a,b,c,d,w_1,w_2,\ldots\}.
$$
Precompute the sets $Q(\varepsilon,\varepsilon),Q(a,a),Q(b,b),Q(c,c),Q(d,d)$
(e.g., as in \cite[Section 4]{Lysenok_Miasnikov_Ushakov:2010})
and form the initial conjugacy table for $\{\varepsilon,a,b,c,d\}$.
Then process words $w_1,w_2,\ldots$,
going from the left to the right and
extend the table as described in Section \ref{se:representatives}.
Finally, output YES if $u$ and $v$ have the same
representatives, i.e.,
$u^\ast=v^\ast$ and NO otherwise.
Below we discuss some important details.

\begin{remark}
Suppose that $w_1,\ldots,w_{i-1}$ are processed and consider $w_i$.
There are two cases. If $w_i\in\St(1)$,
then, as in the proof of Proposition \ref{pr:descendant_length},
its children are strictly shorter
and, hence, have been processed on previous iterations. Such $w_i$
can be processed immediately.

If $w_i\notin\St(1)$, then its child $w_i'$
might have the same length and might be unprocessed.
In that event, we attempt to process $w_i'$.
If it happens so that $w_i'\notin\St(1)$ and its child
$w_i''$ has not been processed, then we
attempt to process $w_i''$.
By Proposition \ref{pr:descendant_length} the children of
$w_i''$ must be shorter than $w_i''$, and, hence, $w_i''$
can be processed. We process $w_i''$, then $w_i'$, then $w_i$.
\end{remark}

\begin{wrapfigure}{r}{0.35\textwidth}
\centering
\includegraphics[scale=0.65]{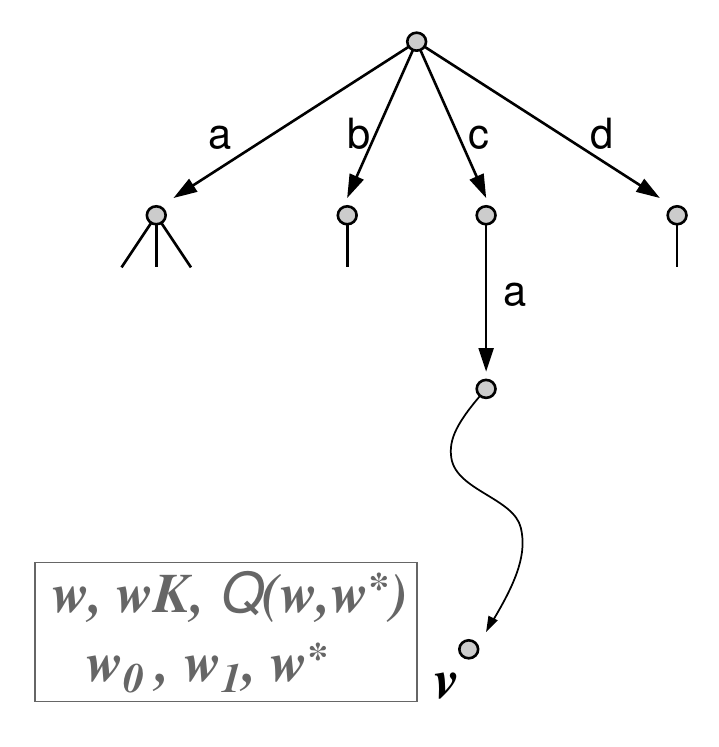}
\label{fig:trie1}
\end{wrapfigure}
We use a trie $\Lambda_1$ to store the set of words $W$ and
another trie $\Lambda_2$ to organize the conjugacy table.
In the end, after all words are processed,
each vertex $v$ in $\Lambda_1$ that represents $w=\mu(v)\in W$
gets the following computed:
\begin{itemize}
\item
The word $w$ and the coset $wK$.
\item
The representative $w^\ast$ of the conjugacy class of $w$
and the set $Q(w,w^\ast)$.
\item
The words $w_0,w_1$ for $w\in\CR^e$, where $\psi(w)=(w_0,w_1)$.
The word $w_0w_1$ for $w\notin\CR^e$, where $\psi(w)=(w_0,w_1)a$,
\end{itemize}
It follows from item (e) of Lemma \ref{le:total-norm_conj} that
the data
\begin{align*}
&\Set{(w,wK,w_0,w_1,w^\ast,Q(w,w^\ast))}{w\in W\cap \CR^e}\\
\cup&
\Set{(w,wK,w_0w_1,w^\ast,Q(w,w^\ast))}{w\in W\setminus \CR^e}.
\end{align*}
requires $O(|u|+|v|)$ space.

The labels $(w_0^\ast,w_1^\ast)$ or $(w_0w_1)^\ast$ in
the conjugacy table are words  over the alphabet $\{a,b,c,d\}$
with comma.
The aforementioned trie $\Lambda_2$ is used for efficient
manipulations with the conjugacy table; it defines the set of all labels
in the table.
Each marked vertex $v$ in $\Lambda_2$ defines the label of a row
and has a comma separated list of at most $256$ representatives.

\begin{proposition}
The described algorithm works in $O(|u|+|v|)$ time.
\end{proposition}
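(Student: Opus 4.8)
The plan is to write $N=|u|+|v|$ and to bound the running time phase by phase, showing that each of the four phases costs $O(N)$: building the splitting trees together with the word set $W$, precomputing the per-word auxiliary data ($wK$, $w_0$, $w_1$, etc.), sorting $W$ in shortlex order, and processing the words to fill the conjugacy table.

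First I would bound the size of the data and the tree-building cost. By item (e) of Lemma \ref{le:total-norm_conj} we have $\sum_{w\in W}|w|=O(N)$, and $|W|=O(N)$ because $|V(T_9(w))|\le 4\|w\|$ by item (c), while each vertex of $T_9(w)$ carries at most two descendant subtrees consisting of words of norm $<9$, each of bounded size by Lemma \ref{le:total-norm_conj}(a) (the words of norm $<9$ being the finitely many listed in Table \ref{tab:w9}). Building $T(u)$ and $T(v)$ amounts to repeatedly splitting a label $\mu(v)$, which --- applying $\psi$ and reducing --- costs $O(|\mu(v)|)$; summing over all vertices gives $\sum_v O(|\mu(v)|)=O(N)$ by item (e). The same bound covers inserting the labels into the trie $\Lambda_1$ (Lemmas \ref{le:array_of-words-add} and \ref{le:array_of-words}) and computing, for each $w$, the coset $wK$ in the order-$16$ group $\Gamma/K$ and the split words, all in $O(|w|)$ per word. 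Sorting $W$ in shortlex order costs $O(N)$ by a standard counting/radix sort of words over the fixed alphabet $\{a,b,c,d\}$ (bucket by length, then radix-sort each length class; the total work telescopes to $O(\sum_{w\in W}|w|)$).

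The heart of the argument is the processing phase, and the key quantitative claim is that processing $w_i$ costs $O(|w_i|)+O(1)$. The $O(1)$ term accounts for the conjugacy tests: a row holds at most $256$ entries by Theorem \ref{th:diff-conj-classes}, and each test $Q(w_i,w')$ is computed in constant time by Proposition \ref{pr:lifting}, because every $Q$-set it calls for is already stored. Indeed, a row is labelled by the representatives of the children, so an entry $w'=(w_0',w_1')$ of the row labelled $(w_0^\ast,w_1^\ast)$ satisfies $(w_0')^\ast=w_0^\ast$ and $(w_1')^\ast=w_1^\ast$, whence $Q(w_0',w_0^\ast)=Q(w_0',(w_0')^\ast)$ is one of the stored sets, and all the arithmetic happens on subsets of the $16$-element group $\Gamma/K$. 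The $O(|w_i|)$ term is the cost of assembling the label string and performing the trie lookup and possible insertion in $\Lambda_2$. Here the decisive observation is that every representative is shortlex-minimal in its conjugacy class among the words of $W$: the representative of a class is the first member of that class added as a row entry, and words are processed in shortlex order, so $|w_i^\ast|\le|w_i|$. Consequently, writing $w_{i,0},w_{i,1}$ for the children of $w_i$, the label $(w_{i,0}^\ast,w_{i,1}^\ast)$ (resp. $(w_{i,0}w_{i,1})^\ast$) has length at most $|w_{i,0}|+|w_{i,1}|+O(1)\le|w_i|+O(1)$ by Proposition \ref{pr:descendant_length}. Summing, $\sum_i\big(O(|w_i|)+O(1)\big)=O\big(\sum_{w\in W}|w|\big)+O(|W|)=O(N)$.

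The main obstacle --- and the step I would write out most carefully --- is precisely this control of the label lengths, since a priori a conjugacy-class representative could be far longer than the element it represents, and then the sum of label lengths, hence the $\Lambda_2$ work, need not be linear. What rescues it is that processing in shortlex order pins each representative to the shortlex-minimal element of its class in $W$; establishing $|w^\ast|\le|w|$ rigorously requires the consistency fact that all words of a given class in $W$ are routed to the same row position and matched against its first entry. This in turn relies on $\St(1)$ being normal (so conjugate words lie on the same side of $\St(1)$), on Proposition \ref{pr:conj-systems}(1)--(2), and on the algorithm checking both the row $(w_0^\ast,w_1^\ast)$ and the row $(w_1^\ast,w_0^\ast)$. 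A secondary point to verify is that the out-of-order processing of same-length children in the case $w\notin\St(1)$ (described in the Remark) does not inflate the count: by inequality \eqref{eq:ineq2} a path cannot keep the same label length for three consecutive edges, so the recursive ``process-the-child-first'' calls have depth bounded by a constant and each word of $W$ is still processed exactly once. Finally, precomputing $Q(\varepsilon,\varepsilon),\dots,Q(d,d)$ and the initial table is $O(1)$, the space bound $O(N)$ follows from item (e) of Lemma \ref{le:total-norm_conj} as noted, and the concluding test $u^\ast=v^\ast$ is $O(1)$ using the stored references; adding the four phase bounds yields the claimed $O(|u|+|v|)$ total.
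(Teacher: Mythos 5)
Your proposal is correct and follows essentially the same route as the paper's proof: build the trees and the set $W$ in linear time via Lemma \ref{le:total-norm_conj}, sort $W$ in shortlex order in linear time, and process each $w_i$ in $O(|w_i|)$ time using the $256$-entry bound of Theorem \ref{th:diff-conj-classes} together with the constant-time $Q$-set computations of Proposition \ref{pr:lifting}, summing via Lemma \ref{le:total-norm_conj}(e). The paper states these steps tersely, while you additionally justify two details it leaves implicit --- that shortlex processing order forces $|w^\ast|\le|w|$ (so row labels stay short and the $\Lambda_2$ work is linear), and that the out-of-order processing of same-length children in the Remark has bounded recursion depth by \eqref{eq:ineq2} --- both of which are correct and strengthen the argument.
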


\begin{proof}
Splitting is a straightforward linear time procedure.
Hence, by Lemma \ref{le:total-norm_conj},
it requires $O(|u|+|v|)$ time to enumerate all descendants
of $u$ in $T(u)$ and $v$ in $T(v)$, and form the set $W$.
Using random access machine and tries
we sort words in $W$ in shortlex order
$\varepsilon,a,b,c,d,w_1,w_2,\ldots$ in linear time.
Processing a word $w_i$ requires the following manipulations.
\begin{itemize}
\item
Compute the label $(w_{i0}^\ast,w_{i1}^\ast)$ if $w_i\in\CR^e$
or $(w_{i0} w_{i1})^\ast$ if $w_i\notin\CR^e$.
\item
Find the vertex in $\Lambda_2$
corresponding to $(w_{i0}^\ast,w_{i1}^\ast)$
or $w_{i0}^\ast w_{i1}^\ast$ and access the list of at most
$256$ representatives;
or determine that such a row does not exist
and add a new row.
\item
For a representative $w'$ compute $Q(w,w')$
using formulae from Sections
\ref{se:processing-even-w} and \ref{se:processing-odd-w}.
\end{itemize}
Each of these steps can be done in $O(|w_i|)$ steps.
It follows from Lemma \ref{le:total-norm_conj} that
it requires $O(|u|+|v|)$ time to process all words in $W$.
\end{proof}

\begin{theorem}
Given $w_1,\ldots,w_k\in\Gamma$, we can decide if
$w_i\sim_\Gamma w_j$ for some $i\ne j$ in $O(|w_1|+\ldots+|w_k|)$ time.
\end{theorem}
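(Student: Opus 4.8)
The plan is to run exactly the algorithm of Section~\ref{se:representatives}, but to seed the word set $W$ with the vertex labels of all $k$ splitting trees at once and to finish with a single duplicate-detection pass over the computed representatives. The crucial observation is that the conjugacy table assigns a representative $w^\ast$ to \emph{every} word in $W$ simultaneously, with $w_i^\ast = w_j^\ast$ if and only if $w_i \sim_\Gamma w_j$; hence the very same bookkeeping that decides a single pair $(u,v)$ already solves the list problem once $W$ is enlarged, and no pairwise comparisons are needed.

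First I would reduce each input and construct the splitting trees $T(w_1),\ldots,T(w_k)$, whose roots are labelled $w_1,\ldots,w_k$, and form
$$
W = \bigcup_{i=1}^{k} \{\mu(s) \mid s\in V(T(w_i))\} \cup \{\varepsilon,a,b,c,d\}.
$$
I would then sort $W$ in shortlex ascending order and process the words from left to right exactly as in Sections~\ref{se:processing-even-w} and~\ref{se:processing-odd-w}, populating the conjugacy table. By Proposition~\ref{pr:descendant_length} (together with the Remark that handles odd words whose child has equal length) the children needed to compute the label of a word $w$ are processed before $w$, so the left-to-right sweep is well defined. Upon termination every $w_i$ carries a representative $w_i^\ast$ of $c_{w_i}$, and since $w_i^\ast$ is the shortlex-minimal processed word in its class we have $|w_i^\ast| \le |w_i|$.

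To detect a conjugate pair I would insert $w_1^\ast,\ldots,w_k^\ast$ one at a time into a fresh trie (Lemma~\ref{le:array_of-words}), outputting YES the first time an inserted word is already present and NO if no collision ever occurs. Because distinct conjugacy classes receive distinct representative words, a collision arises exactly when $w_i^\ast = w_j^\ast$ for some $i\ne j$, which is equivalent to $w_i \sim_\Gamma w_j$. This final pass costs $O(\sum_i |w_i^\ast|) = O(\sum_i |w_i|)$.

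For the running time, item (e) of Lemma~\ref{le:total-norm_conj} gives $\sum_{s\in V(T(w_i))}|\mu(s)| \le 800|w_i|$, so $\sum_{w\in W}|w| = O(\sum_i |w_i|)$, and building and shortlex-sorting $W$ via tries is linear in this total. Each word $w$ is then processed in $O(|w|)$ time because Theorem~\ref{th:diff-conj-classes} bounds every table row by $256$ representatives, so label lookup and the conjugacy tests against a row cost $O(256\cdot|w|)=O(|w|)$. The main obstacle is essentially already discharged by the earlier results: the nontrivial content is the $256$-bound of Theorem~\ref{th:diff-conj-classes} and the total-norm estimate of Lemma~\ref{le:total-norm_conj}, which jointly guarantee that enlarging $W$ from two trees to $k$ trees keeps both the number of table rows (at most $|W|$) and the per-word processing cost linear. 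What remains genuinely new is only the recognition that the representative map is global, so the list problem collapses to duplicate detection on $\{w_i^\ast\}$, which the trie handles in linear time.
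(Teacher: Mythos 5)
Your proposal is correct and follows essentially the same route as the paper: the paper's own proof likewise forms $W$ as the union of vertex labels of all $k$ splitting trees, processes it with the representative-assignment algorithm of Section~\ref{se:representatives}, and then checks whether the list $w_1^\ast,\ldots,w_k^\ast$ contains two equal words. Your additional details (shortlex processing order, the trie-based duplicate detection, and the linearity bounds from Lemma~\ref{le:total-norm_conj} and Theorem~\ref{th:diff-conj-classes}) are exactly the ingredients the paper delegates to its preceding proposition and sections.
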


\begin{proof}
Compute the set $W=\bigcup_i \{\mu(s) \mid s\in V(T(w_i))\}$,
process $W$ as above, and decide if the list
$w_1^\ast,\ldots,w_k^\ast$ has two equal words.
\end{proof}

\section{Conjugacy search problem}
\label{se:conjugator}

In this section we shortly discuss
the problem of computing a conjugator for
a pair of conjugate elements in $\Gamma$.

\begin{proposition}\label{pr:lift1}
Let $x_0,x_1\in\CR$ be such that $(x_0,x_1)\in\psi(\St(1))$.
Then there exists $x\in\CR^e$ satisfying the following conditions:
$$
\psi(x)=(x_0,x_1)
\mbox{ in }\Gamma\times\Gamma
\ \ \mbox{ and }\ \
|x|\le 2(|x_0|+|x_1|)+10.
$$
Furthermore, such $x$ can be computed in $O(|x_0|+|x_1|)$ time.
\end{proposition}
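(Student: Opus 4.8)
The plan is to build $x$ by inverting the splitting homomorphism $\psi$ one generator at a time, scanning the reduced words $x_0,x_1$ from the left. The images $\psi(b)=(a,c)$, $\psi(c)=(a,d)$, $\psi(d)=(1,b)$, $\psi(aba)=(c,a)$, $\psi(aca)=(d,a)$, $\psi(ada)=(b,1)$ show that left–multiplying a preimage by a generator $s\in\{b,c,d,aba,aca,ada\}$ replaces the target pair $(x_0,x_1)$ by $(\phi_0(s)^{-1}x_0,\ \phi_1(s)^{-1}x_1)$. I would therefore maintain a current pair $(y_0,y_1)$, initialized to $(x_0,x_1)$, and at each step inspect the two leading letters to choose a generator that cancels one of them: a leading $a$ in $y_0$ together with a leading $c$ or $d$ in $y_1$ is stripped by $b$ or $c$ (length $1$); a leading $b$ in $y_1$ is stripped by $d$ (length $1$); a leading $c$ or $d$ in $y_0$ together with a leading $a$ in $y_1$ is stripped by $aba$ or $aca$ (length $3$); and a leading $b$ in $y_0$ is stripped by $ada$ (length $3$). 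Writing the chosen generators $s_1,\dots,s_m$ in order and reducing gives a word $x=\red(s_1\cdots s_m)$; since every $s_i\in\St(1)$ we have $x\in\CR^e$ and $\psi(x)=\prod_i\psi(s_i)=(x_0,x_1)$, while $|x|\le\sum_i|s_i|$. Each step touches only a bounded prefix of $y_0,y_1$, so with doubly–ended access to the words the whole procedure runs in $O(|x_0|+|x_1|)$ time.

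The first thing to verify is that the procedure never stalls before $(y_0,y_1)$ has been driven down to one of the finitely many base configurations of bounded length. This is exactly where the hypothesis $(x_0,x_1)\in\psi(\St(1))$ is used: an arbitrary pair of reduced words need not be liftable, and for a non‑liftable pair the leading letters can be mutually incompatible so that no move applies. I would argue that liftability is an invariant of the process — multiplying by $\psi(s)^{-1}$ keeps the pair inside the cosets cut out by the condition $(y_0K,y_1K)\in L$ from the previous subsection, since left multiplication by $s\in\St(1)$ preserves membership in $\psi(\St(1))$ — and then show that every liftable pair outside the base cases admits at least one of the moves above, which strictly decreases $|y_0|+|y_1|$. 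Because the front letters of $\{b,c,d\}$ interact through the Klein four–group relations ($bc=d$, $bd=c$, $cd=b$), the leading letter of $y_i$ can mutate rather than simply vanish, so the cleanest bookkeeping tracks the cosets $y_0K,y_1K$ together with the two leading letters; a finite case check over $\Gamma/K\times\Gamma/K$ then establishes that no liftable pair is stuck.

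The main obstacle is the length bound with the sharp constant $2$. The length‑one generators $b,c,d$ are harmless, but $aba,aca,ada$ add $3$ to $|x|$ while removing only one or two letters, so a per‑step estimate gives only the factor $3$. The point is that the length‑three generators are self‑amortizing: peeling $ada$ turns a leading $b$ of $y_0$ into a leading $a$, which a subsequent $b$ or $c$ then removes together with a letter of $y_1$; and each $a$ that $aba$ or $aca$ deletes from $y_1$ lies between genuine letters of $y_0$ that the algorithm also consumes. I would make this precise with the potential $\Phi=2(|y_0|+|y_1|)$, showing that over each maximal run of length‑three moves together with the length‑one moves they trigger the drop in $\Phi$ is at least the total generator length. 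Equivalently, one can bound the number of length‑three generators by $\tfrac12(|x_0|+|x_1|)+O(1)$, using that in a reduced word genuine letters and $a$'s alternate, so $x_0$ has at most $\lceil(|x_0|+1)/2\rceil$ genuine letters, which are the only ones requiring a length‑three generator. The additive constant $10$ absorbs the two end blocks (which may be empty), the single parity adjustment needed to land in $\CR^e$ rather than $\CR\setminus\CR^e$, and the bounded base configurations.

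Verifying the amortization in the presence of the Klein–four cancellations among the front letters — i.e.\ checking that the mutation of leading letters never lets the length‑three moves accumulate faster than the potential pays for them — is the step I expect to require the most care, and it is also where I would spend the effort to confirm the factor $2$ is achieved rather than $3$; the correctness (no stalling) step is routine once the invariance of the $L$–condition and the finite case analysis over $\Gamma/K\times\Gamma/K$ are in place.
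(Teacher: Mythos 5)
Your construction hinges on the claim that every liftable pair outside a bounded set of base configurations admits one of your stripping moves, and this claim is false. The reason is that the leading letters of the reduced words $x_0,x_1$ need not be produced by the leading syllable of any preimage: when $\phi_0,\phi_1$ are applied to a word in $\St(1)$, images of later syllables can cancel all the way to the front. Concretely, take
$$
x \;=\; abacadacababaca \;=\; (aba)\,c\,(ada)\,c\,(aba)\,b\,(aca)\in\St(1).
$$
Then
$$
\psi(x)=\bigl(c\cdot a\cdot b\cdot a\cdot c\cdot a\cdot d,\;\; a\cdot d\cdot 1\cdot d\cdot a\cdot c\cdot a\bigr)=(cabacad,\;ca),
$$
so $(cabacad,ca)\in\psi(\St(1))$, both components are reduced, and both begin with $c$. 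None of your moves applies: stripping a leading $c$ or $d$ from $y_0$ (via $aba$ or $aca$) requires a leading $a$ in $y_1$; stripping a leading $c$ or $d$ from $y_1$ (via $b$ or $c$) requires a leading $a$ in $y_0$; and neither component begins with $b$. The Klein four-group mutations you invoke do not rescue this: multiplying $y_0$ or $y_1$ on the left by $b$ (via $ada$ or $d$) only toggles the leading letter inside $\{c,d\}$, so from $(cabacad,ca)$ you reach only leading-letter pairs in $\{c,d\}\times\{c,d\}$, none of which admits a strip move. The remaining generator applications prepend an $a$ to one component and are at best length-neutral, e.g. $(cabacad,ca)\mapsto(acabacad,a)\mapsto(cacabacad,\varepsilon)\mapsto(acabacad,a)$, producing cycles in which your potential $\Phi=2(|y_0|+|y_1|)$ never drops. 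So the finite case check you defer to would refute your invariant rather than confirm it; here the leading $c$ of $ca$ is contributed by the sixth syllable of $x$, not the first, and no left-to-right greedy inversion of $\psi$ can see that without lookahead. Both termination and the factor $2$ therefore remain unproved.

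This cancellation phenomenon is exactly what the paper's proof is designed to sidestep. Instead of matching leading letters, it defines letter substitutions $\tau_0$ ($a\mapsto c$, $b\mapsto ada$, $c\mapsto aba$, $d\mapsto aca$) and $\tau_1$ ($a\mapsto aca$, $b\mapsto d$, $c\mapsto b$, $d\mapsto c$), chosen so that $z_0=\tau_0(x_0)$ satisfies $\psi(z_0)=(x_0,\delta_0)$ with the garbage $\delta_0$ confined to the dihedral subgroup $\gp{a,d}$ of order $8$ (hence representable by a word of length at most $4$), and similarly for $\tau_1$ on the other side. Setting $z_1=\tau_1(\delta_0^{-1}x_1)$, the word $x=z_0z_1$ satisfies $\psi(x)=(x_0\delta_1,x_1)$ and $|x|\le 2(|x_0|+|x_1|)+10$, with no termination or amortization argument needed; finally the hypothesis $(x_0,x_1)\in\psi(\St(1))$ combined with the Schreier graph of $\psi(\St(1))$ in $\Gamma\times\Gamma$ forces the residual $\delta_1$ to be trivial. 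If you want to keep a scanning algorithm you would need lookahead or backtracking to decide which syllable of the unknown preimage produced each surviving letter of $(x_0,x_1)$, and you would then have to re-establish termination and the length bound from scratch; the substitution construction delivers both immediately.
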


\begin{proof}
Consider maps $\tau_0,\tau_1:\{a,b,c,d\}^\ast\to\{a,b,c,d\}^\ast$
defined as follows:
\begin{align*}
\tau_0(a)&= c  &\tau_1(a)&=aca\\
\tau_0(b)&=ada &\tau_1(b)&=d\\
\tau_0(c)&=aba &\tau_1(c)&=b\\
\tau_0(d)&=aca &\tau_1(d)&=c.
\end{align*}
It is easy to check that $\tau_0,\tau_1$ have the following properties:
\begin{itemize}
\item
$|\tau_i(w)|\le 2|w|+1$ for any $w\in\CR$.
\item
For any $x_0\in\CR$ there exists $\delta_0\in\{a,d\}^\ast$ such that
$\psi(\tau_0(x_0))=(x_0,\delta_0)$ in $\Gamma\times\Gamma$.
Since $(ad)^4=1$ we may assume that $|\delta_0|\le 4$.
\item
For any $x_1\in\CR$ there exists $\delta_1\in\{a,d\}^\ast$
of length at most $4$ such that
$\psi(\tau_1(x_1))=(\delta_1,x_1)$ in $\Gamma\times\Gamma$.
\end{itemize}
Let $z_0=\tau_0(x_0)$ and $\delta_0\in\{a,d\}^\ast$ a word of length
at most $4$ satisfying $\psi(z_0)=(x_0,\delta_0)$.
Let $z_1=\tau_1(\delta_0^{-1}x_1)$ and $\delta_1\in\{a,d\}^\ast$
a word of length  at most $4$ satisfying
$\psi(z_1)=(\delta_1,\delta_0^{-1}x_1)$.
By construction, the word $x=z_0z_1$ satisfies the following:
$$
\psi(x)=(x_0\delta_1,x_1)\ \ \mbox{and}\ \ |x|\le 2(|x_0|+|x_1|)+10.
$$
Finally, notice that $(\delta_1,\varepsilon)\in\psi(\St(1))$
since $(x_0,x_1),(x_0\delta_1,x_1)\in\psi(\St(1))$.
The subgroup $\psi(\St(1))$ has index $8$ in $\Gamma\times\Gamma$
and its Schreier graph relative to the generating set
$(1,a),(a,1),(1,b),(b,1),(1,d),(d,1)$
is shown in Figure \ref{fig:schreier-lift}.
\begin{figure}[h]
\includegraphics[scale=0.5]{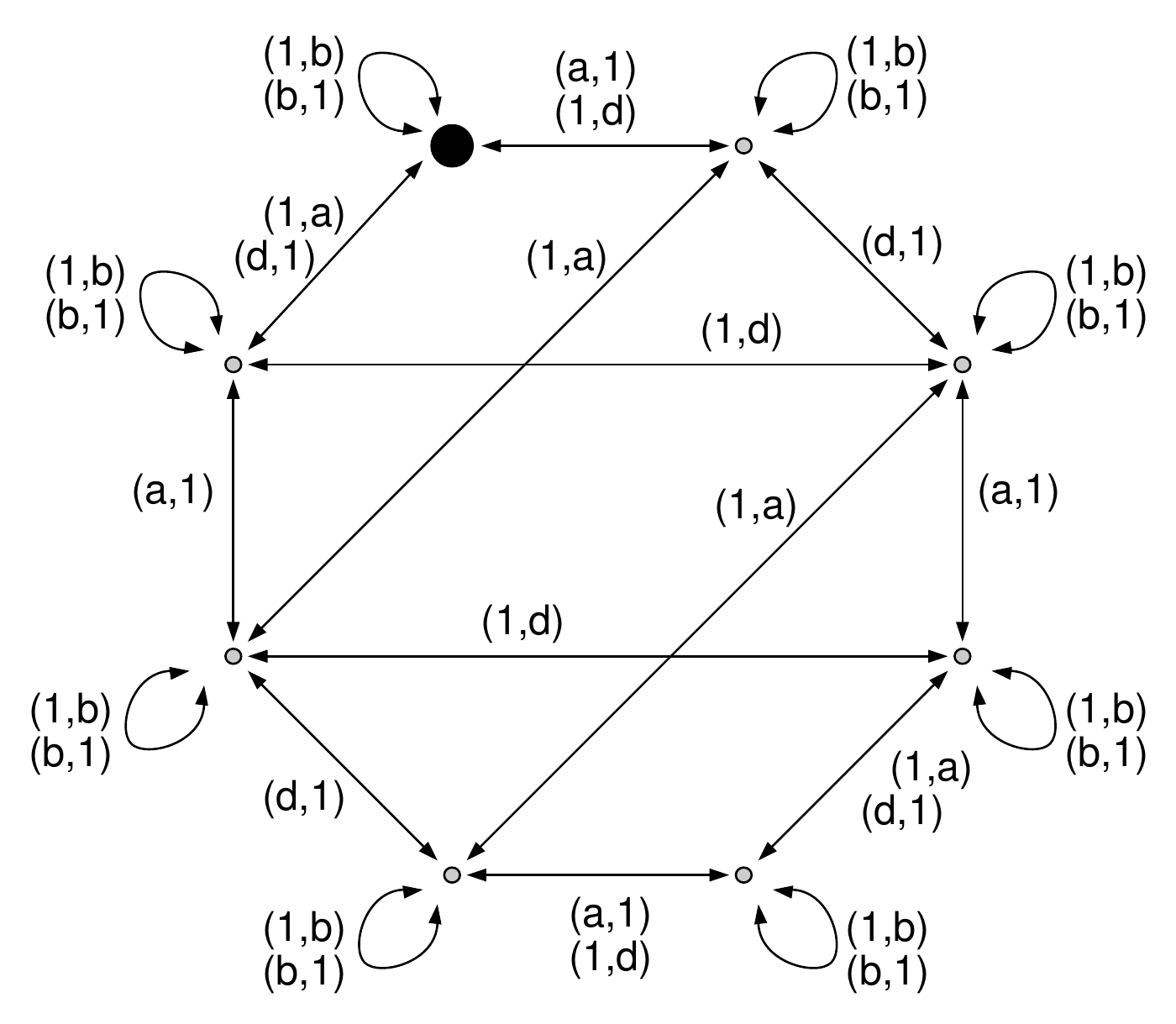}
\caption{\label{fig:schreier-lift}
The
Schreier graph of $\psi(\St(1))$ in $\Gamma\times\Gamma$.}
\end{figure}
Using the Schreier graph of $\psi(\St(1))$ we deduce that
$\delta_1=\varepsilon$. Thus, $x$ is the required word.
\end{proof}

%

Next, we prove that the length of a conjugator
for conjugate elements $u,v\in\CR$ can be bounded
by a polynomial of $|u|+|v|$.
For $u\in\CR$ define $h(u)$ to be the \emph{height} of
the tree $T(u)$, i.e., the longest distance from the
root to a leaf. For a pair $u,v\in\CR$ define
$$
h(u,v) = \max(h(u),h(v)).
$$
By Proposition \ref{pr:norm-bound},
$$
h(u,v) = \log_{1.22}(|u|+|v|) + O(1).
$$
For $u,v\in\CR$ and $g\in Q(u,v)$ define
$$
L(u,v,g) = \min \{|x| \mid x^{-1}ux=v \mbox{ and }xK = g\}
$$
and for $n\in\MN$ define
$$
L(n) = \max_{\substack{g\in Q(u,v)\\h(u,v)\le n\\}} L(u,v,g),
$$
that can be called the \emph{conjugacy-growth function} relative
to the splitting tree height.

\begin{proposition}\label{pr:conjugator_length}
$L(h(u,v))$ is $O((|u|+|v|)^{8})$.
\end{proposition}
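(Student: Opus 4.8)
The plan is to establish a recursion for $L(n)$ that tracks how a conjugator's length grows as we lift solutions back up the splitting tree, and then to show this recursion yields a polynomial bound. The key observation is that Proposition \ref{pr:conj-systems} tells us exactly how a conjugator $x$ for $(u,v)$ is assembled from conjugators of the children: in the even case, $x=(x_0,x_1)$ where $x_0,x_1$ conjugate the children, and in the odd case, $x_0$ conjugates $(u_0u_1,v_0v_1)$ while $x_1$ is determined by the formula $x_1=v_1x_0u_1^{-1}$ (or the analogous variant). I would use Proposition \ref{pr:lift1} as the central tool for the lifting step: given conjugators $x_0,x_1$ of length at most $\ell$ for the children, it produces an $x$ with $\psi(x)=(x_0,x_1)$ and $|x|\le 2(|x_0|+|x_1|)+10$, computable in linear time.

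First I would set up the recursion. Fix a height $n$ and a pair $u,v$ with $h(u,v)\le n$ together with $g\in Q(u,v)$. The children $u_i,v_i$ satisfy $h\le n-1$, so any conjugator realizing a prescribed $K$-coset for them has length at most $L(n-1)$. In the even case, picking conjugators $x_0,x_1$ of length $\le L(n-1)$ compatible with $g$ and applying Proposition \ref{pr:lift1} gives
\begin{equation}\label{eq:Lrec-even}
L(n)\le 2\cdot\bigl(2L(n-1)\bigr)+10 = 4L(n-1)+10.
\end{equation}
The odd case needs more care: here $x_1=v_1x_0u_1^{-1}$, so $|x_1|\le |v_1|+|x_0|+|u_1|$, and $|x_0|\le L(n-1)$ gives a conjugator for $(u_0u_1,v_0v_1)$. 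Feeding $(x_0,x_1)$ into Proposition \ref{pr:lift1} and bounding $|u_1|,|v_1|$ by $|u|+|v|$ yields a recursion of the shape $L(n)\le cL(n-1)+c'(|u|+|v|)$ for absolute constants $c,c'$.

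Next I would solve the recursion. Since $h(u,v)=\log_{1.22}(|u|+|v|)+O(1)$, unrolling a recursion of the form $L(n)\le cL(n-1)+(\text{linear term})$ over $n=h(u,v)$ levels produces a dominant factor of $c^{n}=c^{\log_{1.22}(|u|+|v|)}=(|u|+|v|)^{\log_{1.22}c}$. The constant $c$ coming from \eqref{eq:Lrec-even} and its odd-case analogue is what determines the exponent; to land on $O((|u|+|v|)^{8})$ I expect the effective base to satisfy $\log_{1.22}c\le 8$, i.e. $c\le 1.22^{8}\approx 5.1$, which is consistent with the factor $4$ appearing in \eqref{eq:Lrec-even}. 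The additive linear terms, summed geometrically, contribute only a lower-order polynomial and do not affect the leading exponent.

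The main obstacle will be the odd-case bookkeeping. Unlike the clean even split, the formula $x_1=v_1x_0u_1^{-1}$ couples the size of $x_1$ to the sizes of $u_1,v_1$ at the current level, not merely to $L(n-1)$, so one must verify that these extra contributions aggregate into a genuinely polynomial (rather than exponential) correction when summed across all odd nodes on a root-to-leaf path. This requires using Proposition \ref{pr:descendant_length} to control how often the odd case can occur and how the labels shrink, together with item (e) of Lemma \ref{le:total-norm_conj} to bound $\sum |\mu(v)|$ linearly. Once the base of the geometric recursion is pinned down carefully and the odd-case correction is shown to be absorbed into the polynomial bound, the estimate $L(h(u,v))=O((|u|+|v|)^{8})$ follows.
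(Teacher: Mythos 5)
Your proposal follows essentially the same route as the paper: the same recursion extracted from Proposition \ref{pr:conj-systems}, with Proposition \ref{pr:lift1} as the lifting device, the same effective base $4$, and the same unrolling over the height $h(u,v)=\log_{1.22}(|u|+|v|)+O(1)$. Your anticipated use of item (e) of Lemma \ref{le:total-norm_conj} is also exactly what the paper does to absorb the odd-case terms $|u_1|,|v_1|$; no separate counting of how often the odd case occurs (via Proposition \ref{pr:descendant_length}) is needed, since the crude bound $|u_1|+|v_1|\le |u|+|v|$ at every level already suffices, giving the single recursion $L(h)\le 4L(h-1)+4(|u|+|v|)+11$ covering all four cases.

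One quantitative claim in your solution of the recursion is backwards, and it is worth flagging because the margin here is razor-thin. You assert that the additive linear terms, summed geometrically, ``contribute only a lower-order polynomial and do not affect the leading exponent.'' In fact they are the dominant contribution: unrolling $L(n)\le 4L(n-1)+c'(|u|+|v|)$ gives $4^{n}L(0)+c'(|u|+|v|)\frac{4^{n}-1}{3}$, and the second term is $\Theta\bigl((|u|+|v|)\cdot 4^{n}\bigr)$, exceeding the first by a full factor of $|u|+|v|$. Consequently the exponent is not $\log_{1.22}4\approx 6.97$ but $\log_{1.22}4+1\approx 7.97$; this extra $+1$ is precisely why the proposition states the exponent $8$ rather than $7$. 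Correspondingly, your sanity check ``$c\le 1.22^{8}\approx 5.1$'' tests the wrong constraint: the base must satisfy $\log_{1.22}c+1\le 8$, i.e. $c\le 1.22^{7}\approx 4.02$, so the factor $4$ passes only by a hair, and an effective base of, say, $4.5$ would break the bound even though it satisfies your test. With that correction, the rest of your outline matches the paper's proof.
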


\begin{proof}
Fix a pair of conjugate elements $u,v\in\CR$.
Consider the four cases of Proposition \ref{pr:conj-systems}.
\begin{itemize}

\item[(1)]
If $u=(u_0,u_1),v=(v_0,v_1)\in\CR^e$
is a case of item (1) of Proposition
\ref{pr:conj-systems},
then $x_0^{-1} u_0x_0=v_0$
and $x_1^{-1} u_1x_1=v_1$
for some $x_0,x_1\in\CR$ satisfying
$
|x_i|\le L(h(u_i,v_i)) \le L(h(u,v)-1)
$
for each $i=0,1$.
By Proposition \ref{pr:lift1} we have
$$
|x|\le 2(|x_0|+|x_1|)+10.
$$

\item[(2)]
If $u=(u_0,u_1),v=(v_0,v_1)\in\CR^e$
is a case of item (2) of Proposition
\ref{pr:conj-systems}, then the same argument
shows that a conjugator $xa$ satisfies
$$
|xa|\le 2(|x_0|+|x_1|)+11
$$
for some $x_0,x_1\in\CR$ of length at most $L(h(u,v)-1)$.
\item[(3)]
If $u=(u_0,u_1)a,v=(v_0,v_1)a\in\CR\setminus\CR^e$
is a case of item (3) of Proposition \ref{pr:conj-systems},
then
$u_0u_1 = x_0^{-1} v_0v_1 x_0$ for some $x_0\in\CR$
satisfying
$$
|x_0|\le L(h(u_0u_1,v_0v_1)) = L(h(u,v)-1).
$$
That defines the element $x_1=v_1x_0u_1^{-1}$ and,
by Proposition \ref{pr:lift1},
$$
|x|\le 2(|x_0|+|x_0|+|u_1|+|v_1|)+10.
$$

\item[(4)]
If $u=(u_0,u_1)a,v=(v_0,v_1)a\in\CR\setminus\CR^e$
is a case of item (4) of Proposition \ref{pr:conj-systems},
then
$u_1u_0 = x_0^{-1} v_0v_1 x_0$ for some $x_0\in\CR$
satisfying
$$
|x_0|\le L(h(u_1u_0,v_0v_1)) = L(h(u,v)-1).
$$
That defines the element $x_1=v_1x_0u_0^{-1}$ and,
by Proposition \ref{pr:lift1},
$$
|xa|\le 2(|x_0|+|x_0|+|u_0|+|v_1|)+11.
$$
\end{itemize}
In each case the following upper bound holds
\begin{align*}
L(h(u,v))
&\le 4 L(h(u,v)-1))+4(|u|+|v|)+11 \\
&\le
4^{h(u,v)} L(1) + 4^{h(u,v)}\sum_{s\in V(T(u))\cup V(T(v))} |\mu(s)|+11\sum_{i=0}^{h(u,v)-2}4^i\\
&=
4^{\log_{1.22}(|u|+|v|)+O(1)}(O(1)+800(|u|+|v|))\\
&=O((|u|+|v|)^{8}).
\qedhere
\end{align*}
\end{proof}

Notice that
the proof of Proposition \ref{pr:conjugator_length}
does not simply show the existence of a short (polynomial size)
conjugator for a pair of conjugate elements $u,v \in\Gamma$,
it also describes a procedure to compute a short conjugator.
That implies the following theorem.

\begin{theorem}
There exists a polynomial time algorithm
to compute a conjugator $x\in\CR$ for a
pair of conjugate elements $u,v\in\CR$.
\end{theorem}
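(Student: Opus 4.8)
The plan is to show that the recursion implicit in the proof of Proposition~\ref{pr:conjugator_length} can be turned into an explicit polynomial-time algorithm. First I would run the linear-time procedure of Section~\ref{se:conjugacy-gamma} on the instance $(u,v)$ to build the splitting trees $T(u),T(v)$, compute the coset data $wK$ and the $Q$-sets attached to all of their nodes, and confirm $Q(u,v)\ne\emptyset$ (which holds by hypothesis). Fix some $g\in Q(u,v)$; this coset records, modulo $K$, a conjugator that I am going to reconstruct as an honest word.

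Next I would define a recursive routine that, given a pair of conjugate labels $(u,v)$ appearing in the splitting trees together with a target coset $g\in Q(u,v)$, returns a word $x\in\CR$ with $x^{-1}vx=u$ and $xK=g$. The case analysis mirrors Proposition~\ref{pr:conj-systems} exactly: using the decomposition of $Q(u,v)$ in Proposition~\ref{pr:lifting} I first decide whether $g$ lies in the ``even'' summand $\lift[\cdots]$ or in the ``$a$-translated'' summand $\lift[\cdots]a$, which tells me whether the conjugator being built is in $\St(1)$ or not. In the even-by-even cases I read off child cosets $g_0,g_1$ with $(g_0,g_1)\in L$ and recurse to obtain actual child conjugators $x_0,x_1$ for $(u_0,v_0),(u_1,v_1)$ (or the swapped pairing); in the odd-by-odd cases I recurse once on $(u_0u_1,v_0v_1)$ to get $x_0$ and then define $x_1$ algebraically as in items~(3)--(4) of Proposition~\ref{pr:conj-systems}. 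In every case $(x_0,x_1)\in\psi(\St(1))$ by construction, so Proposition~\ref{pr:lift1} produces the required lift $x$ in time $O(|x_0|+|x_1|)$, after which I apply $\red$. The recursion terminates because, by Proposition~\ref{pr:descendant_length}, the labels strictly shrink down the tree, and for the finitely many words of norm below $9$ (those listed in Table~\ref{tab:w9}) I precompute conjugators and $Q$-sets once.

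Correctness then follows by induction on the height of the splitting tree, each step being justified by Propositions~\ref{pr:conj-systems} and~\ref{pr:lifting}. For the running time I would argue as follows. The number of recursive calls equals the number of nodes in $T(u)\cup T(v)$, which is $O(|u|+|v|)$ by Lemma~\ref{le:total-norm_conj}. The only nontrivial work at a node is the single lift of Proposition~\ref{pr:lift1} (together with a reduction), whose cost is linear in the lengths of the child conjugators; by Proposition~\ref{pr:conjugator_length} every intermediate conjugator has length $O((|u|+|v|)^{8})$, so each lift costs $O((|u|+|v|)^{8})$ and the whole procedure runs in $O((|u|+|v|)^{9})$ time, which is polynomial.

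The main obstacle I anticipate is bookkeeping the coset constraint so that the two partial conjugators chosen for the two subtrees are \emph{simultaneously} liftable, i.e.\ so that $(x_0K,x_1K)\in L$ and hence $(x_0,x_1)\in\psi(\St(1))$. This is precisely what is secured by fixing $g\in Q(u,v)$ and decomposing it through Proposition~\ref{pr:lifting} \emph{before} descending, rather than solving the two child instances independently; threading the coset $g$ through the recursion is exactly what makes Proposition~\ref{pr:lift1} applicable at every node. The remaining care is to confirm that the length bound of Proposition~\ref{pr:conjugator_length} really dominates the lift costs, since the words grow as one ascends the tree.
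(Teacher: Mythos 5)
Your proposal is correct and takes essentially the same approach as the paper: the paper's entire proof is the observation that the recursion in the proof of Proposition~\ref{pr:conjugator_length} is constructive, and your write-up simply makes that construction explicit (threading the coset $g$ via Proposition~\ref{pr:lifting}, lifting child conjugators via Proposition~\ref{pr:lift1}, and bounding the cost by the length bound of Proposition~\ref{pr:conjugator_length}). Your polynomial-time accounting matches what the paper leaves implicit, so there is nothing to correct.
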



\begin{thebibliography}{10}

\bibitem{Bartholdi:1998}
L.~{Bartholdi}.
\newblock {The growth of Grigorchuk's torsion group}.
\newblock {\em Internat. Math. Res. Notices}, 20:1049--1054, 1998.

\bibitem{Bartholdi-Groth-Lysenok:2019}
L.~{Bartholdi}, T.~{Groth}, and I.~{Lysenok}.
\newblock {Commutator width in the first Grigorchuk group}.
\newblock preprint. Available at \url{https://arxiv.org/abs/1710.05706}, 2019.

\bibitem{Booth:1980}
K.~S. Booth.
\newblock Lexicographically least circular substrings.
\newblock {\em Information Processing Letters}, 10(4):240--242, 1980.

\bibitem{Epstein-Holt:2006}
D.~{Epstein} and D.~{Holt}.
\newblock The linearity of the conjugacy problem in word-hyperbolic groups.
\newblock {\em Int. J. Algebra Comput.}, 16:287--305, 2006.

\bibitem{GZ}
M.~{Garzon} and Y.~{Zalcstein}.
\newblock {The complexity of Grigorchuk groups with application to
  cryptography}.
\newblock {\em Theoret. Comput. Sci.}, 88:83--98, 1991.

\bibitem{Grigorchuk:1980}
R.~I. {Grigorchuk}.
\newblock {Burnside's problem on periodic groups}.
\newblock {\em Funct. Anal. Appl.}, 14:41--43, 1980.

\bibitem{Grigorchuk:1984}
R.~I. {Grigorchuk}.
\newblock {Degrees of growth of finitely generated groups and the theory of
  invariant means}.
\newblock {\em (Russian) Izv. Akad. Nauk SSSR Ser. Mat.}, 48(5):939--985, 1984.

\bibitem{Grigorchuk:2006}
R.~I. {Grigorchuk}.
\newblock {Solved and unsolved problems around one group}.
\newblock In {\em Infinite Groups: Geometric, Combinatorial and Dynamical
  Aspects}, volume 248 of {\em Progress in Mathematics}, pages 117--218.
  Birkhauser Basel, 2006.

\bibitem{Grigorchuk-Wilson:2003}
R.~I. {Grigorchuk} and J.~{Wilson}.
\newblock A structural property concerning abstract commensurability of
  subgroups.
\newblock {\em Journal of the London Mathematical Society}, 68(3):671--682,
  2003.

\bibitem{Harpe}
Pierre de~la {Harpe}.
\newblock {\em Topics in geometric group theory}.
\newblock The University of Chicago Press, 2000.

\bibitem{KMP}
D.~{Knuth}, J.~H. {Morris}, and V.~{Pratt}.
\newblock Fast pattern matching in strings.
\newblock {\em SIAM J. Comput.}, 6:323--350, 1977.

\bibitem{Leonov:1998}
Yu.~G. {Leonov}.
\newblock {The conjugacy problem in a class of 2-groups}.
\newblock {\em Mat. Zametki}, 64:573--583, 1998.

\bibitem{Lysenok:1985}
I.~{Lysenok}.
\newblock A system of defining relations for a grigorchuk group.
\newblock {\em Math. Notes}, 38(4):503--516, 1985.

\bibitem{Lysenok_Miasnikov_Ushakov:2010}
I.~{Lysenok}, A.~G. {Miasnikov}, and A.~{Ushakov}.
\newblock {The conjugacy problem in the Grigorchuk group is polynomial time
  decidable}.
\newblock {\em Groups Geom. Dyn.}, 4(4):813--833, 2010.

\bibitem{Lysenok_Miasnikov_Ushakov:2016}
I.~{Lysenok}, A.~G. {Miasnikov}, and A.~{Ushakov}.
\newblock {Quadratic equations in the Grigorchuk group}.
\newblock {\em Groups Geom. Dyn.}, 10(1):201--239, 2016.

\bibitem{Macdonald-Myasnikov-Nikolaev-Vassileva:2015}
J.~{Macdonald}, A.~{Myasnikov}, A.~{Nikolaev}, and S.~{Vassileva}.
\newblock Logspace and compressed-word computations in nilpotent groups.
\newblock arXiv:1503.03888 [math.GR], 2015.

\bibitem{MU5}
A.~G. {Miasnikov} and A.~{Ushakov}.
\newblock {Random subgroups and analysis of the length-based and quotient
  attacks}.
\newblock {\em J. Math. Crypt.}, 2:29--61, 2008.

\bibitem{P}
G.~{Petrides}.
\newblock Cryptanalysis of the public key cryptosystem based on the word
  problem on the grigorchuk groups.
\newblock In {\em 9th IMA International Conference on Cryptography and Coding},
  volume 2898 of {\em Lecture Notes Comp. Sc.}, pages 234--244. Springer, 2003.

\bibitem{Rozhkov:1998}
A.~V. {Rozhkov}.
\newblock {The conjugacy problem in an automorphism group of an infinite tree}.
\newblock {\em Mat. Zametki}, 64:592--597, 1998.

\end{thebibliography}
 \end{document}